\documentclass[reqno]{amsart}
\usepackage[T1]{fontenc}
\usepackage[pdftex]{color,graphicx}
\usepackage{wrapfig} \usepackage[numbers,square]{natbib}
\usepackage{caption}
% top matter ---------------------
\title[Singularities in EHD equations]{Singularities in axisymmetric free boundaries for ElectroHydroDynamic equations}

\author[M. Smit Vega Garcia]{Mariana Smit Vega Garcia}
\address{Department of Mathematics, University of Duisburg-Essen, Thea-Leymann-Strasse 9, 45127, Essen, Germany}
\email{mariana.vega-smit@uni-due.de}
\author[E. Varvaruca]{Eugen Varvaruca}
\address{Faculty of Mathematics, ``Al. I. Cuza" University, Bdul. Carol I, nr. 11, 700506, Ia\c{s}i, Romania}
\email{eugen.varvaruca@uaic.ro}
\author[G.S. Weiss]{Georg S. Weiss}
\address{Department of Mathematics, University of Duisburg-Essen, Thea-Leymann-Strasse 9, 45127, Essen, Germany}
\email{georg.weiss@uni-due.de}
\thanks{M. Smit Vega Garcia and G.S. Weiss have been partially supported by the project
``Singularities in ElectroHydroDynamic equations'' of the German Research Council}
\date{}
% end top matter -------------------------

% some macros -----------------------

% theorem-like environments ---------------------
\theoremstyle{plain}
\newtheorem{theorem}{Theorem}[section]

\newtheorem{lemma}[theorem]{Lemma}
\newtheorem{proposition}[theorem]{Proposition}
\newtheorem{corollary}[theorem]{Corollary}

\theoremstyle{definition}
\newtheorem{definition}[theorem]{Definition}
\theoremstyle{definition}
\newtheorem{remark}[theorem]{Remark}
\numberwithin{equation}{section}

% end environments -------------------

% body starts here --------------------
\def\R{{\bf R}}

\def\N{{\bf N}}

\def\div{\textrm{\rm div}}

\newcommand{\D}{\partial}
\newcommand{\Om}{\Omega}

\newcommand{\non}{\nonumber}
\newcommand{\be}{\begin{equation}}
\newcommand{\ee}{\end{equation}}

\begin{document}

\begin{abstract}
We consider singularities in the ElectroHydroDynamic equations. In a regime where we are allowed to neglect surface tension, and assuming that the free surface is given by an injective curve and that either the fluid velocity or the electric field satisfies a certain non-degeneracy condition, we prove that either the fluid region or the gas region is asymptotically a {\em cusp}.
Our proofs depend on a combination of monotonicity formulas and a non-vanishing result by Caffarelli and Friedman.
As a by-product of our analysis we also obtain a special solution with convex conical air-phase which we believe to be new.
\end{abstract}

\maketitle

\section{Introduction}\label{S:intro}

In his pioneering paper of 1964, \cite[]{taylor}, Sir Geoffrey Taylor describes an experiment for the formation of a
liquid cone by exposing a fluid jet to an electric field, and he formally derives a formula for the electric potential (referred to as the {\em Taylor-cone solution} in the sequel) of that field. At a critical voltage value, the surface of the fluid ruptures, and a fluid jet forms, a phenomenon which has found applications as various as electrospraying, electrospinning and, to give more concrete examples, ink jet printers, mass spectrometers and the production of lab-on-the-chips. Despite these various applications and extensive research from a physical point of view, the phenomenon of the Taylor cone as well as other singularities in electro-hydrodynamics remain in effect untouched by mathematical analysis, possibly due to the difficulty of the free boundary problem arising from the particular ElectroHydroDynamic equations (EHD equations) used as a model.

We use as a basis for our analysis the simplest model available. We consider a stationary, irrotational flow of an incompressible, inviscid, perfectly conducting liquid; outside the fluid there is a dielectric gas, and the stationary electric field is driven by the potential difference between the perfectly conducting liquid and some fixed outer domain boundary or infinity. Motivated by particular singularities on which gravity is supposed to have no influence (this point will be underlined by the heuristic argument below) we will neglect the influence of gravity. Since the fluid is a perfect conductor, the stationary electric potential $\phi$ is constant in the fluid region, and we may assume it to have the value $0$ there. In this setting, the ElectroHydroDynamic equations simplify
(see \cite[(10)]{vdb} as well as \cite[Section 2]{zubarev})
to
\begin{align}
&\Delta \phi = 0 \textrm{ in the gas region,}\label{eins}\\
&\Delta V =0 \textrm{ in the fluid region,}\\
&|\nabla \phi|^2 - |\nabla V|^2 = \kappa+B\textrm{ on the free surface of the fluid,}\label{bern}\\
& \phi=0\textrm{ on the free surface of the fluid,}\\
&\langle \nabla V,\nu\rangle = 0 \textrm{ on the free surface of the fluid,}\label{vier}
\end{align}
where $B$ is a constant, $V$ is the velocity potential of the stationary fluid, $\nu$ is the outward pointing
unit normal and $\kappa$ the mean curvature on the boundary of the fluid phase. Note that we choose the sign of
the mean curvature of the boundary of the set $A$ so that $\kappa$ is positive on convex portions of $\partial A$.

Viewed as a free boundary problem, problem (\ref{eins})-(\ref{vier}) is new, so there are no results
from that perspective.
Possible reasons for the lack of results may be the "bad" sign of the mean curvature (explained in more
detail below) as well as the Neumann boundary condition (\ref{vier}). While there are many
results concerning free boundaries that are level sets, there are relatively few results on problems without this property.

There are other related free boundary/free discontinuity problems we should mention.
For example, in \cite{ACS}, the authors study the free boundary problem
\begin{align}
&\Delta u = 0 \textrm{ in }\Omega\cap(\{ u>0\}\cup \{ u<0\}),\label{acseq1}\\
& |\nabla u^+|^2 - |\nabla u^-|^2 = -\kappa \textrm{ on the free surface } \partial\{ u\le 0\}\cap\Omega,\label{acseq2}
\end{align}
where $\Om$ is a smooth domain of $\R^n$. However, even in the case $u^-\equiv 0$, problem (\ref{acseq1})-(\ref{acseq2}) differs
from (\ref{eins})-(\ref{vier}) by the sign of the mean curvature. This becomes clearer
when comparing the energy functionals associated to the two problems: in the case of one-phase solutions ($u^-\equiv 0$) of (\ref{acseq1})-(\ref{acseq2}), the energy takes the form
$$
P_\Omega(\{u>0\}) + \int_{\Omega\cap \{ u>0\}} |\nabla u|^2,$$ where
$P_\Omega(\{u>0\})$ is the perimeter of the set $\{ u>0\}$ relative to the domain $\Omega$, while
in the case of one-phase solutions ($V\equiv 0$) of problem (\ref{eins})-(\ref{vier}), where we extend $\phi$ by the
value $0$ to the fluid phase and we consider $B=0$, the energy takes the form
\begin{equation}
- P_\Omega(\{\phi >0\}) + \int_{\Omega\cap \{ \phi>0\}} |\nabla \phi|^2.\label{energy}\end{equation}
As a consequence, constructing solutions by minimising the energy makes perfect sense for
(\ref{acseq1})-(\ref{acseq2}) and leads in dimension $n\le 7$ to regular solutions
(see \cite{silvestre}), while there obviously exist no minimisers of energy (\ref{energy}).
Moreover, critical points of the energy (\ref{energy}) may have singularities even in dimension $2$,
an example of a singularity being the real part of the complex root, multiplied by a suitable
positive constant. In that example two components of the fluid phase touch and create a multiplicity $2$ interphase, so while the curvature and $[|\nabla \phi|^2]$ (which denotes the jump of $|\nabla \phi|^2$) are both zero outside the origin, $|\nabla \phi|^2$ is not zero. It is not difficult to find more evidence underlining the drastic difference in
the qualitative behaviour of solutions of (\ref{acseq1})-(\ref{acseq2}) and
(\ref{eins})-(\ref{vier}).

Another problem related to the system (\ref{eins})-(\ref{vier}) is, in two dimensions and under certain assumptions,
that satisfied by critical points of the Mumford-Shah functional (see
for example \cite{MR1857292} and \cite{MR2143526}): the Mumford-Shah equations
are (up to terms of lower order)
\begin{align*}
&\Delta m = 0 \textrm{ in }\Omega\setminus S_m,\\
&[|\nabla m|^2]= \kappa\textrm{ on the free discontinuity set }\Omega \cap S_m,\\
& \langle \nabla m, \nu\rangle = 0 \textrm{ on the free discontinuity set }\Omega \cap S_m,
\end{align*}
where $[|\nabla m|^2]$ denotes the jump of $|\nabla m|^2$. The sign of the jump
and that of the mean curvature are chosen such that
$m=0$ in a component $D$ of $\Omega$ implies that
$$|\nabla m|^2= -\kappa\textrm{ on }\Omega\cap \partial (\Omega\setminus D).$$
Note that the homogeneous Dirichlet condition (\ref{vier}) is replaced in the Mumford-Shah problem by a homogeneous
Neumann condition. That means that results available for solutions of the Mumford-Shah problem
can be applied to one-phase solutions $V$ of (\ref{eins})-(\ref{vier}), in which case
$\phi\equiv 0$. Another possibility, again in two dimensions, is to consider under certain assumptions
the harmonic conjugate of $\phi$ which, combined with $V$, yields a solution of
the Mumford-Shah equations. This also means that in two dimensions the results of the paper
\cite{WZ} are directly applicable to (\ref{eins})-(\ref{vier}).

Returning now to the system (\ref{eins})-(\ref{vier}), the following two examples in a $3$D axisymmetric setting will be useful in understanding scaling and magnitude of the functions involved:

1. Suppose first that $\phi$ and $V$ are homogeneous functions, that the fluid phase is a connected cone, and that the air phase is connected as well. Then the mean curvature scales like $1/r$, where $r$ is the distance to the origin. Condition (\ref{bern}) implies then that $\phi$ and $V$ are both homogeneous functions of order $1/2$,
and that the constant $B$ equals $0$. From 3.1 (3), it follows that there exists (up to rotation) a unique
solution $(\phi,0)$ with these properties, namely the well-known Taylor cone solution, for which the opening angle of the fluid cone is roughly $98.6^\circ$.

2. Secondly, suppose that the fluid phase is an infinite cylinder of radius $r$,
and that $\phi=1$ on the fixed outer boundary, which we assume to be the cylinder of radius $1$ centered around the
same axis as the fluid cylinder. We obtain that $\phi$ (the capacity potential of the cylinder)
equals $d \log(s/r)$, where $d=1/\log(1/r)$ and the variable $s$ is the distance to the common cylinder axis.
Condition (\ref{bern}) implies in this case that
$$\frac{1}{r^2 (\log(1/r))^2} - |\nabla V|^2 = \frac{1}{r}+B.$$
This suggests that the effect of surface tension is negligible as $r\to 0$, and, moreover, that the same holds
for thin fluid jets and cusps. We expect the shape of the fluid region to be determined by a balance of
electric and fluid flow on the free surface. (Note that in both examples gravity terms would be negligible as well.)

The second example is of particular interest to us, since both experiments and numerical simulations
show the formation of a stable fluid jet ending in a cusp (and then separating into a spray of droplets by electric repulsion).
Based on the second example, we will in the present paper neglect surface tension and study the
still nonlinear as well as nontrivial problem arising from (\ref{eins})-(\ref{vier}) by setting surface tension to zero. This is justified as we are in this paper interested in a regime close to a fluid cusp
and {\em ignore the separation into droplets}.

\begin{minipage}{\textwidth}
\begin{center}
\begin{picture}(0,0)%
\includegraphics{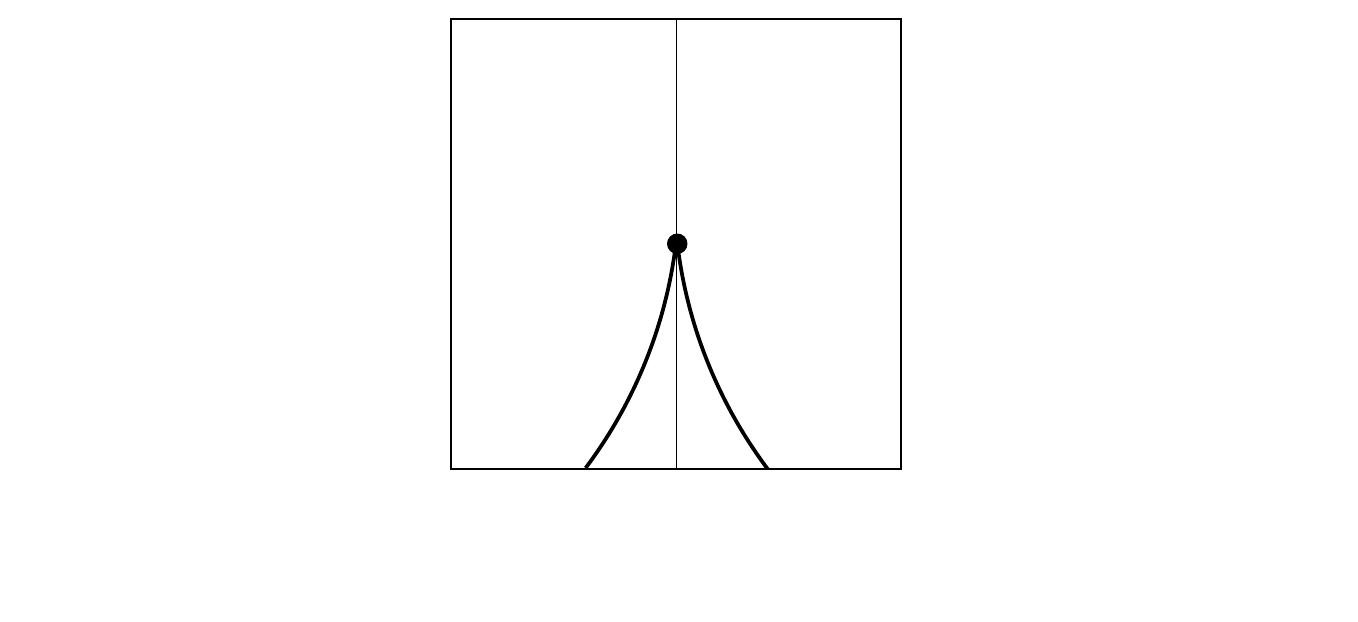}%
\end{picture}%
\setlength{\unitlength}{2368sp}%
\begingroup\makeatletter\ifx\SetFigFont\undefined%
\gdef\SetFigFont#1#2#3#4#5{%
  \reset@font\fontsize{#1}{#2pt}%
  \fontfamily{#3}\fontseries{#4}\fontshape{#5}%
  \selectfont}%
\fi\endgroup%
\begin{picture}(10834,4964)(-2411,-5183)
\put(3676,-2311){\makebox(0,0)[lb]{\smash{{\SetFigFont{11}{13.2}{\rmdefault}{\mddefault}{\updefault}{\color[rgb]{0,0,0}$\phi >0$}%
}}}}
\end{picture}%

\end{center}
\captionof{figure}{Dynamics suggested by our analysis}\label{fig9}
\end{minipage}

So from now on, we will focus on the following set of equations:
\begin{align}
&\Delta \phi = 0 \textrm{ in the gas region,}\label{einsa}\\
&\Delta V =0 \textrm{ in the fluid region,}\\
&|\nabla \phi|^2 - |\nabla V|^2 = 0\textrm{ on the free surface of the fluid,}\label{berna}\\
& \phi=0\textrm{ on the free surface of the fluid,}\\
& \langle \nabla V,\nu\rangle = 0 \textrm{ on the free surface of the fluid.}\label{viera}
\end{align}

Moreover, we will restrict our analysis to an axisymmetric setting:
using cylindrical coordinates and combining the electric potential
and the Stokes stream function (see for example \cite[Exercise 4.18 (ii)]{fraenkel}), assumed to not change sign in the respective regions,
into a single function $u$, we obtain the free boundary problem
\begin{align} \label{intro_sol}
&\div \left(x_1 \nabla u(x_1,x_2)\right)=0 \textrm{ in the gas phase } \{ u>0\}, \non \\
&\div \left(\frac{1}{x_1} \nabla u(x_1,x_2)\right)=0 \textrm{ in the water phase } \{ u<0\}, \\
&x_1 {\vert \nabla u^+(x_1,x_2)\vert}^2 - \frac{1}{x_1}{\vert \nabla u^-(x_1,x_2)\vert}^2
=  0 \text{ on the free surface } \partial
\{ u >0\},\non\end{align}
where $u$ is defined in $\Om$, a subset of the right half-plane $\{(x_1,x_2) \ : \ x_1 \ge 0\}$ that is connected
and relatively open (in the sense that $\Om= \tilde{\Om}\cap \{(x_1,x_2) \ : \ x_1\ge 0\}$, where $\tilde{\Om}$ is an open set in $\R^2$), and the original velocity field is
$$ \left(-\frac{1}{x_1} \partial_2 u^- \cos \vartheta,
-\frac{1}{x_1} \partial_2 u^- \sin \vartheta ,
\frac{1}{x_1} \partial_1 u^-\right),$$
where $(X,Y,Z)=(x_1\cos\vartheta, x_1\sin\vartheta, x_2)$.

Note that although this problem may be viewed as a free boundary problem
with jumping diffusion coefficient $a(x,u(x))$, the problem {\em cannot} be transformed
by the Kirchhoff transform into a regular problem, since the
diffusion coefficients in the free boundary condition and the diffusion coefficients
in each phase are not compatible in the way required for that method to work.

Another approach that does not seem to work, is the derivation of a {\em frequency formula}.
Frequency formulas have been introduced to partial differential equations in the celebrated
results by F. Almgren \cite{almgren} and N. Garofalo-F. Lin \cite{GL}
and have been successfully applied to free boundary problems (see \cite{vwacta},
\cite{Varvaruca2012}, \cite{vwcpam}, \cite{GP} and \cite{GSVG}).
The fact that they can still be applied in the presence of a reaction term
(see \cite{vwacta},
\cite{Varvaruca2012} and \cite{vwcpam}) and that the problems studied in
\cite{vwacta},
\cite{Varvaruca2012} and \cite{vwcpam} are rather similar to the fluid-phase 
part of problem \eqref{intro_sol} gave high hope to the possibility of a frequency formula
for \eqref{intro_sol}. Unfortunately various candidates for frequency functions considered by the authors
turned out not to work. This is not a complete surprise as there are hitherto no known frequency
formulas for two-phase Stefan problems, and possibly the elliptic system \eqref{intro_sol} is more akin to
that group of problems. Still the methods we use in the present paper are somewhat related to
frequency formulas.

In our main result, we show that the energy satisfies a non-degeneracy condition and if the free boundary is given by an injective curve, then it must be asymptotically cusp-shaped.

\begin{theorem} Let $\gamma\in (0,1/2)$, and let $u$ be a weak solution of \eqref{intro_sol} in the sense of Definition \ref{weaksol}. Suppose that $0\in \D \{u>0\}$ and that $\D \{u>0\}\cap B_1^+$ is, in a neighborhood of $0$, a continuous injective curve $\sigma: [0,1) \rightarrow \R^2$, where $\sigma=(\sigma_1,\sigma_2)$ and $\sigma(0)=0$. Suppose, additionally, that there exist $r_0, C>0$ such that for each $r\in (0,r_0)$, 
\[
Cr^{2\gamma+1}\le \int_{B_r^+}\left(x_1|\nabla u^+|^2+\frac{1}{x_1}|\nabla u^-|^2\right)dx.
\]
Then,
\[
\lim\limits_{t\rightarrow 0+}\left|\frac{\sigma_1(t)}{\sigma_2(t)}\right|=0,
\]
that is, the free boundary is asymptotically cusp-shaped.
\end{theorem}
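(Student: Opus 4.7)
I would argue by contradiction. Suppose $\limsup_{t\to 0^+}|\sigma_1(t)/\sigma_2(t)|>0$: extract $t_k\to 0^+$ and $\delta>0$ with $|\sigma_1(t_k)/\sigma_2(t_k)|\ge\delta$, set $r_k:=|\sigma(t_k)|$, and (along a further subsequence) let $\nu^*:=\lim_k\sigma(t_k)/r_k\in S^1$, which satisfies $\nu^*_1\ge\delta/\sqrt{1+\delta^2}>0$. Rescale the two phases separately,
\[
u^+_k(y):=r_k^{-\gamma}u^+(r_k y),\qquad u^-_k(y):=r_k^{-\gamma-1}u^-(r_k y);
\]
this is the unique pair of scalings (up to normalisation) preserving the weighted equations of \eqref{intro_sol} and the Bernoulli condition $x_1|\nabla u^+|^2=x_1^{-1}|\nabla u^-|^2$. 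Under it the two weighted energies scale identically:
\[
\int_{B_1^+}\!\Big(y_1|\nabla u^+_k|^2+\tfrac{1}{y_1}|\nabla u^-_k|^2\Big)dy=r_k^{-(2\gamma+1)}\!\int_{B_{r_k}^+}\!\Big(x_1|\nabla u^+|^2+\tfrac{1}{x_1}|\nabla u^-|^2\Big)dx\ge C>0,
\]
and, invoking compactness and closure for the weak-solution class, a subsequence of $(u^+_k,u^-_k)$ converges to a non-trivial two-phase blow-up $(u^+_0,u^-_0)$ on $B_1^+$ sharing a common limit free boundary.

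The next step is to upgrade $(u^+_0,u^-_0)$ to homogeneous profiles of degrees $\gamma$ and $\gamma+1$ respectively. Because no Almgren--Garofalo--Lin frequency formula seems to apply, I would design a Weiss-type monotonicity formula for a functional of the form
\[
\Phi(\rho):=\rho^{-(2\gamma+1)}\int_{B_\rho^+}\Big(x_1|\nabla u^+|^2+\tfrac{1}{x_1}|\nabla u^-|^2\Big)dx\;-\;\textrm{(boundary correction at rates }\gamma,\gamma+1\textrm{)},
\]
compute $\Phi'(\rho)$, and arrange the correction so that $\Phi'\ge 0$ with the equality case forcing the two expected homogeneities on blow-ups.

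Injectivity and continuity of $\sigma$ with $\sigma(0)=0$ then imply that $\sigma(\cdot)/r_k$ subsequentially Hausdorff-accumulates on the limit free boundary, giving $\nu^*\in\partial\{u^+_0>0\}$; homogeneity then promotes this to the inclusion $\{t\nu^*:t\ge 0\}\cap\overline{B_1^+}\subset\partial\{u^+_0>0\}$. Since $\nu^*_1>0$, the 3D axisymmetric lift of this ray is a non-degenerate rotation cone $C$ around the $x_2$-axis, and separation of variables in the 3D axisymmetric picture represents $u^+_0$ as $c^+r^\gamma P_\gamma(\sin\theta)$ in one component of $B_1^+\setminus C$ and $u^-_0$ as the Stokes stream function of a 3D axisymmetric harmonic velocity potential $V_0=c^-r^\gamma P_\gamma(\sin\theta)$ in the complementary component, with Dirichlet condition $u^+_0=0$ and Neumann condition $\partial_\nu V_0=0$ on $C$. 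The combined Dirichlet/Neumann/Bernoulli conditions then collapse to the transcendental equation characterising the classical Taylor cone, whose only solution with non-degenerate opening is the Taylor-cone homogeneity $\gamma=1/2$, contradicting $\gamma<1/2$. The Caffarelli--Friedman non-vanishing result is invoked to exclude the degenerate case in which one of $u^\pm_0$ vanishes identically and the preceding argument becomes vacuous.

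The main obstacle will be the homogeneity step: in the absence of a working frequency formula, producing a Weiss-type functional that simultaneously handles the two weights $x_1$ and $x_1^{-1}$ on the two phases, respects the different natural homogeneities $\gamma$ and $\gamma+1$ of $u^+$ and $u^-$, and interacts correctly with the Bernoulli condition is the analytical core of the proof. A secondary technicality is the passage from the Hausdorff convergence of the rescaled curves $\sigma(\cdot)/r_k$ to the inclusion of the full ray $\{t\nu^*:t\ge 0\}$ in the blow-up free boundary, which uses injectivity of $\sigma$ together with the homogeneity of $u^+_0$.
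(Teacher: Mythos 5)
Your proposal diverges from the paper's argument at the very first substantive step, and that divergence opens a gap you do not close. You assert that compactness produces a non-trivial \emph{two-phase} blow-up $(u^+_0,u^-_0)$, and then build the whole contradiction on separation of variables / Taylor-cone analysis for a two-phase homogeneous profile. But the Alt--Caffarelli--Friedman-type formula (Theorem~\ref{T:ACF}) in fact delivers the opposite conclusion: under the rescaling $u_m(x)=r_m^{-\gamma}u^+(r_m x)+r_m^{-\gamma-1}u^-(r_m x)$ with $2\gamma+1<\beta^*$, the product $\int_{B_1^+}x_1|\nabla u_m^+|^2\,dx\cdot\int_{B_1^+}x_1^{-1}|\nabla u_m^-|^2\,dx$ is $r_m^{2(\beta^*-2\gamma-1)}\Phi(r_m)\to 0$, so every blow-up limit is \emph{one-phase}. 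The paper does not treat this as a degenerate side case to be excluded; it is the mechanism of the proof. You also assign Caffarelli--Friedman the wrong job: the paper does not use it to preclude one-phase limits but rather feeds one-phaseness \emph{into} it. After normalizing by the boundary trace ($v_m$), the paper passes to an auxiliary function $z_m$ solving the superlinear equation $\Delta z_m-\tfrac1{x_1}\partial_1 z_m+x_1^{-2}z_m^+=0$; because the negative phase of the limit $z_0$ vanishes and the free-boundary gradient match $|\nabla z_0^+|=|\nabla z_0^-|$ forces $|\nabla z_0^+|=0$ on $\partial\{z_0>0\}$, the non-cusp hypothesis creates a $1$-dimensional set where $z_0=|\nabla z_0|=0$; Theorem~3.1 of Caffarelli--Friedman then yields $z_0\equiv 0$, contradicting the unit normalization. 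None of this appears in your sketch.

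Two further points. First, you identify the homogeneity upgrade via a Weiss functional as the analytical core; the paper explicitly avoids it, remarking that homogeneity of blow-ups via Theorem~\ref{T:mono} would require \emph{a priori} pointwise growth bounds (\ref{growth+})--(\ref{growth-}) that are not hypotheses of the main theorem. In the actual proof, the Weiss functional is used only to obtain a uniform energy bound on the normalized sequence $v_m$ (a Caccioppoli-type role, preventing infinite-order vanishing), not to derive homogeneity. Second, even granting a homogeneous two-phase profile, the final step of your argument --- ``the combined Dirichlet/Neumann/Bernoulli conditions collapse to the transcendental equation characterising the classical Taylor cone, whose only solution is $\gamma=1/2$'' --- is not justified; the claim that the only homogeneous two-phase solution has matching degree $1/2$ requires an argument (essentially the eigenvalue analysis of Section~\ref{notion}, which in the paper goes into defining $\alpha^*$ and $\beta^*$, not into the contradiction in Theorem~\ref{T:main}). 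As written, the proposal would not compile into a proof: the ACF step reroutes the argument away from the two-phase scenario you rely on, and the homogeneity step is unavailable under the stated hypotheses.
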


For general values of $\gamma$ we expect, in view of the results in \cite{vwcpam}, water-filled cusps to exist. However, to exclude water-filled cusps for small $\gamma$ remains a problem we leave to subsequent study.

The proof relies on a monotonicity formula of Alt-Caffarelli-Friedman type (section \ref{S:ACF}) as well as a polynomial expansion result by Caffarelli and Friedman for superlinear equations \cite{cf2}. Moreover, we use a Weiss-type monotonicity formula (section
\ref{S:weiss}, which we prove for any dimension $n$) in order to avoid infinite order vanishing in the proof of our main theorem.
Note that (although possible when assuming strict growth bounds from above and below) we do not use the monotonicity formula \ref{T:mono} to derive asymptotic homogeneity of the solution.

As a by-product of our proofs, we obtain a unique homogeneous solution of \eqref{einsa}-\eqref{viera} (see Remark \ref{C:beta}). For that solution the air phase is a convex cone. It seems to correspond to the well-known "Garabedian bubble" in fluid flow without electric field.

{\bfseries Acknowledgements}
We thank an anonymous referee for providing valuable feedback on a preliminary version of the paper.

\section{Notation}
\label{notation}
We use coordinates $(X,Y,Z)$ in the physical space
$\R^3$ 
as well as two-dimensional coordinates
$x=(x_1,x_2)$ together with partial derivatives $\partial_1,\partial_2$.
Cylindrical coordinates, as used in section \ref{S:intro}, are denoted with
$(X,Y,Z)=(x_1 \cos \vartheta, x_1 \sin \vartheta, x_2)$. We denote by $\langle x, y\rangle$ the Euclidean inner
product in $\R^n \times \R^n$, by $\vert x \vert$ the Euclidean norm
in $\R^n$, by $B_r(x^0):= \{x \in \R^n : \vert x-x^0 \vert < r\}$
the ball of center $x^0$ and radius $r$,
by $B^+_r(x^0):= \{x \in \R^n : x_1>0 \textrm{ and }\vert x-x^0 \vert < r\}$,
by $\partial B^+_r(x^0):= \{x \in \R^n : x_1>0 \textrm{ and }\vert x-x^0 \vert =r\}$
and $\R^n_+ := \{ (x_1,\dots,x_n) : x_1>0\}$ the positive parts.
\emph{Note that $\partial B^+_r(x^0)$ is not the topological boundary
of $B^+_r(x^0)$ and that $B^+_r(x^0)$ is not necessarily a half ball.}

We will use the notation
$B_r$ for $B_r(0)$ as well as $B^+_r$ for $B^+_r(0)$, and denote by $\omega_n$ the $n$-dimensional
volume of $B_1$. 
We will use the weighted $L^2$ and Sobolev spaces, for $U\subset \R^2_+$ open:

\[
L^2_w(U):=\Big\{ v\in L^2(U) \ : \ \int_{U}\left(x_1(v^+)^2+ \frac{1}{x_1}(v^-)^2\right)dx <\infty\Big\},
\]
\begin{align*}
V^{1,p}(U):=\Big\{ v & \in W^{1,p}(U) \ : \ \int_{U} \left(x_1|\nabla v^+|^p+\frac{1}{x_1}|\nabla v^-|^p\right)dx<\infty,\\ &\int_{U}\left(x_1(v^+)^p+\frac{1}{x_1}(v^-)^p\right)dx<\infty \Big\},
\end{align*}
as well as its local and surface versions, for $\Gamma\subseteq \D B_r^+$:
\begin{align*}
V^{1,p}_{\text{loc}}(\R^2_+):=\Big\{ v & \in W^{1,p}_{\text{loc}}(\R^2_+) \ : \ \int_{B_R^+} \left(x_1|\nabla v^+|^p+\frac{1}{x_1}|\nabla v^-|^p\right)dx<\infty,\\ &\int_{B_R^+}\left(x_1(v^+)^p+\frac{1}{x_1}(v^-)^p\right)dx<\infty, \ \text{ for every } R>0 \Big\},
\end{align*}
\[
L^2_w(\Gamma):=\Big\{ v\in L^2(\Gamma) \ : \ \int_{\Gamma}\left(x_1(v^+)^2+ \frac{1}{x_1}(v^-)^2\right)dS<\infty\Big\},
\]

\begin{align*}
V^{1,p}(\Gamma):=\Big\{ v & \in W^{1,p}(\Gamma) \ : \ \int_{\Gamma} \left(x_1|\nabla_{\tau} v^+|^p+\frac{1}{x_1}|\nabla_{\tau} v^-|^p\right)dS<\infty,\\
&\int_{\Gamma}\left(x_1(v^+)^p+\frac{1}{x_1}(v^-)^p\right)dS<\infty\Big\},
\end{align*}
where $dS$ denotes surface measure, and $\nabla_{\tau}v$ denotes the tangential gradient of $v$.
We will also denote by $V^{1,p}_0(\Gamma):=\{ v\in V^{1,p}(\Gamma) \ : \ v=0 \text{ on } \D \Gamma\}$.

We denote by $\chi_A$ the characteristic function
of a set $A$. For any real number $a$, the notation $a^+$
stands for $\max(a, 0)$ and $a^-$
stands for $\min(a, 0)$.  Also, ${\mathcal H}^s$ shall denote the
$s$-dimensional Hausdorff measure and by $\nu$ we will always refer to
the outer normal on a given surface.

\section{Notion of solution and eigenvalue considerations}\label{notion}
Assume $n=2$.
\begin{definition} We define $u\in V^{1,2}(\Omega)$ to be a {\itshape variational solution} of \eqref{intro_sol} if $u\in C^0(\Omega\cap \{x_1>0\})\cap C^2(\Omega\cap \{u\ne 0\}\cap\{x_1>0\})$,
\[
\lim\limits_{\substack{x\rightarrow x^0, \\ x\in \Om\cap\{u>0\}}}x_1u\D_1u =0 \  \ \ \ \ \text{and} \ \ \ \ \
\lim\limits_{\substack{x\rightarrow x^0, \\ x\in \Om\cap\{u<0\}}}u\frac{\D_1u}{x_1} =0
\]
for any $x^0\in \Om\cap \{x_1=0\}$, and the first variation with respect to domain variations of the functional
\[
E(v) := \int_{\Omega} \left( x_1 |\nabla v^+|^2+\frac{1}{x_1}|\nabla v^-|^2\right) dx
\]
vanishes at $v=u$, i.e.
\begin{align*}
0 & =-\frac{d}{d\varepsilon} E(u(x+\varepsilon \phi(x)))\Big|_{\varepsilon=0}\\
&= \int_{\Omega}\Big[\left( x_1|\nabla u^+|^2+\frac{1}{x_1}|\nabla u^-|^2\right)\text{div}\phi - 2x_1\nabla u^+D\phi\nabla u^+
-\frac{2}{x_1}\nabla u^- D\phi\nabla u^-\\
&+\left( |\nabla u^+|^2 - \frac{1}{x_1^2}|\nabla u^-|^2   \right)\phi_1\Big]dx
\end{align*}
for any $\phi=(\phi_1,\phi_2)\in C^1_0(\Omega,\R^2)$ such that $\phi_1=0$ on $\{x_1=0\}$.
\end{definition}

\subsection{Eigenvalue considerations} \ \ \ \ \ \ \

Assume $n=2$.

Given a domain $\Gamma \subseteq \D B_1^+$, we define
\begin{equation}\label{lambda}
\lambda^+(\Gamma) :=\inf_{0\le v\in V^{1,2}_0(\Gamma)}\frac{\int_{\Gamma}x_1|\nabla_{\tau} v|^2dS}{\int_{\Gamma}x_1v^2dS}, \ \ \ \text{ and } \ \ \ \lambda^-(\Gamma) :=\inf_{0\ge v\in V^{1,2}_0(\Gamma)}\frac{\int_{\Gamma}\frac{|\nabla_{\tau}v|^2}{x_1}dS}{\int_{\Gamma}\frac{v^2}{x_1}dS}.
\end{equation}

With the goal of obtaining information about $\lambda^{\pm}(\Gamma)$, which will be instrumental in Section \ref{S:ACF}, we start by making a series of remarks.

\begin{enumerate}

\item If $\Gamma_1\subseteq \Gamma_2\subseteq \D B_1^+$ are two open sets, then
\begin{equation}\label{complementary}
\lambda^{\pm}(\Gamma_1)\ge \lambda^{\pm}(\Gamma_2),
\end{equation}
since any $v\in V^{1,2}_0(\Gamma_1)$ can be extended as zero to $\Gamma_2$. Moreover, if $\Gamma_1 \neq\Gamma_2$, then the inequality in \eqref{complementary} is also strict.
\item Let $\Gamma_{\theta}$ be the arc starting at $0$ and ending at the angle $\theta$. We define the functions
$I^{\pm}(\theta):=\lambda^{\pm}(\Gamma_{\theta})$, which are continuous.

Note that the function which achieves the infimum in the definition of $\lambda^+(\Gamma_{\theta})$ may be extended to a homogeneous solution of $\text{div}(x_1\nabla \cdot )=0$ of degree $\alpha_+(\theta):=\frac{-1+\sqrt{1+4I^+(\theta)}}{2}$ (since the equation leads to $I^+(\theta)=\alpha_+(\theta)(\alpha_+(\theta)+1)$).
Similarly, the function achieving the infimum regarding $\lambda^-(\Gamma_{\theta})$ can be extended to a homogeneous solution of $\text{div}\left(\frac{1}{x_1}\nabla \cdot\right)=0$ of degree $\alpha_-(\theta):=\frac{1+\sqrt{1+4I^-(\theta)}}{2}$ (since the equation leads to $I^-(\theta)=\alpha_-(\theta)(\alpha_-(\theta)-1)$).

Combining \eqref{lambda} with \eqref{complementary} (which proves that $I^+(\pi-\cdot)$ is strictly increasing and that
$I^-$ is strictly decreasing), and the fact that
$I^+(\pi-)=0, I^-(\pi-)=2$ (the homogeneity is $2$ by explicit computation of solutions in the half-plane in \cite{vwcpam}) and that
$I^+(0+)=I^-(0+)=+\infty$, we conclude that there exists a unique $\theta_1\in (0,\pi)$ such that
\begin{equation}\label{A}
I^+(\pi-\theta_1)=I^-(\theta_1).
\end{equation}
Then \eqref{A} implies that there exists a unique $\theta_1\in (0,\pi)$ such that
\[
1+\alpha_+(\pi-\theta_1)=\alpha_-(\theta_1).
\]

We define
\begin{equation}\label{alpha*}
\alpha^*:=\alpha_+(\pi-\theta_1)= \frac{-1+\sqrt{1+4I^+(\pi-\theta_1)}}{2}.
\end{equation}
We call $\alpha^*$ \emph{matched homogeneity}.

\item Given a connected arc $\Gamma\subseteq \D B_1^+$, we make the change of coordinates $x_1=\sin\theta, x_2=\cos \theta$, so that if $f(\theta)=u(\sin \theta,\cos \theta)$, then for some $\theta_1<\theta_2\in (0,\pi)$,
\[
\int_{\Gamma}x_1u^2dS=\int_{\theta_1}^{\theta_2}\sin\theta f^2(\theta)d\theta,  \text{ and }
\int_{\Gamma}x_1|\nabla_{\tau}u|^2dS=\int_{\theta_1}^{\theta_2}\sin\theta(f'(\theta))^2d\theta,
\]
and similarly
\[
\int_{\Gamma}\frac{1}{x_1}u^2dS=\int_{\theta_1}^{\theta_2}\frac{1}{\sin\theta}f^2(\theta)d\theta,  \text{ and } \  \int_{\Gamma}\frac{1}{x_1}|\nabla_{\tau}u|^2dS=\int_{\theta_1}^{\theta_2}\frac{1}{\sin \theta}(f'(\theta))^2d\theta.
\]
Consider now $f$ vanishing at $\theta_1$ and $\theta_2$, and
let $g(\theta):=\sin\theta f(\theta)$.
Using $g'(\theta)=\cos \theta f(\theta)+\sin\theta f'(\theta)$, we compute
\begin{align*}
\int_{\theta_1}^{\theta_2}\frac{1}{\sin \theta}(g'(\theta))^2d\theta
&=\int_{\theta_1}^{\theta_2} \frac{1}{\sin \theta}\Big[\cos^2\theta f^2(\theta)+2\sin \theta\cos \theta f(\theta)f'(\theta)\\
&+\sin^2 \theta (f'(\theta))^2\Big]d\theta
\\
&=\int_{\theta_1}^{\theta_2}\frac{\cos^2\theta}{\sin \theta}f^2(\theta)d\theta+\int_{\theta_1}^{\theta_2}\sin \theta f^2(\theta)d\theta+\int_{\theta_1}^{\theta_2}\sin\theta(f'(\theta))^2d\theta
\\
&=\int_{\theta_1}^{\theta_2}\frac{1}{\sin \theta}f^2(\theta)d\theta+\int_{\theta_1}^{\theta_2}\sin \theta (f'(\theta))^2d\theta.
\end{align*}

This implies that if $U=\bigcup_{j=1}^\infty (\theta_1^j,\theta_2^j)$ and $f$ vanishes on $\theta_i^j$, for $i \in \{1,2\}, j\in \N$,
\begin{align}\label{lambda0}
&\frac{\int_U\frac{1}{\sin \theta}(g')^2d\theta}{\int_U\frac{1}{\sin \theta}g^2d\theta}
= \frac{\int_U\frac{1}{\sin \theta}f^2d\theta+\int_U\sin \theta (f')^2d\theta}{\int_U \sin \theta f^2d\theta}\\
&\ge 1 + \frac{\int_U\sin \theta (f')^2d\theta}{\int_U \sin \theta f^2d\theta}.
\end{align}
Note that as any open subset of $(0,\pi)$ can be written as a countable union of open
intervals, this property extends to each open $U\subseteq(0,\pi)$ which can be written as a countable union of open intervals such that $f$ vanishes at its endpoints.
This property also extends from the $g$ above to any $g$ such that $\int_{\theta_1}^{\theta_2}\frac{1}{\sin \theta}(g'(\theta))^2d\theta$ is finite. We obtain that
$\lambda^-(\Gamma)\ge 1+\lambda^+(\Gamma)$ for any open set $\Gamma \subseteq \D B_1^+$.

\item
Last, we use \cite{cs} (see the discussion following ~(12.8))
to deduce from the above that
\begin{align}\label{lambda2}
&\sqrt{\lambda^+(\Gamma)}+\sqrt{\lambda^-(\D B_1^+\setminus \Gamma)}\nonumber\\
&\ge \sqrt{\lambda^+(\Gamma)}+\sqrt{1+\lambda^+(\D B_1^+ \setminus \Gamma)}\nonumber\\
&\ge \sqrt{\lambda^+(\Gamma)}+\sqrt{\lambda^+(\D B_1^+ \setminus \Gamma)}
\ge 2.
\end{align}

We do not know whether the infimum of $\sqrt{\lambda^+(\Gamma)}+\sqrt{\lambda^-(\D B_1^+\setminus \Gamma)}$ is attained at a connected arc $\Gamma$ or not.
The available rearrangement techniques seem not to be applicable. In theory, it is possible that the infimum is attained at $\Gamma$ split into two disconnected components touching $0$ or $\pi$, respectively. We conjecture, however, that the infimum {\em is} attained at a connected arc.
\end{enumerate}

\section{Alt-Caffarelli-Friendman type monotonicity formula}\label{S:ACF}

In this section we prove an Alt-Caffarelli-Friedman type monotonicity formula appropriately adapted to our framework (see \cite{ACF} and \cite{cs}). We assume that $n=2$. We remark that the computations do not depend on any boundary conditions being satisfied on $\partial\{u>0\}$, and thus the result is applicable even in the case of curvature, which will be explored in a forthcoming paper.

Let $u\in V^{1,2}(\Omega)\cap C(\Omega\cap\{x_1>0\})\cap C^2(\Omega\cap\{u\neq 0\}\cap\{x_1>0\})$ satisfy
\begin{equation}\label{u}
\begin{cases}
\text{div}(x_1\nabla u)=0 \text{ in } \{u>0\},\\
\text{div}\Big(\frac{1}{x_1}\nabla u\Big) = 0 \text{ in } \{ u<0\}.\\
\end{cases}
\end{equation}

Let us define
\begin{equation}\label{beta*}
\beta^*:=\inf_{\substack{\Gamma \text{ open },\\ \Gamma \subseteq \D B_1^+}}\left((\lambda^+(\Gamma))^{1/2}+(\lambda^-(\D B_1^+\setminus \Gamma))^{1/2}\right).
\end{equation}

\begin{lemma}\label{L:beta*} $\beta^*\ge 2$.
\end{lemma}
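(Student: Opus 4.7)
The plan is to observe that the inequality $\sqrt{\lambda^+(\Gamma)}+\sqrt{\lambda^-(\D B_1^+\setminus\Gamma)}\ge 2$ has essentially already been assembled in item (4) of the preceding discussion on eigenvalues: what remains is to verify that this bound is uniform over all admissible $\Gamma$ and then to pass to the infimum in the definition of $\beta^*$.

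More concretely, I would fix an arbitrary open set $\Gamma\subseteq \partial B_1^+$ and run through the two ingredients in order. First, applying the Hardy-type identity derived in item (3) to the complementary set $\partial B_1^+\setminus \Gamma$ yields
\[
\lambda^-(\partial B_1^+\setminus\Gamma)\ge 1+\lambda^+(\partial B_1^+\setminus\Gamma);
\]
this is the place where the axisymmetric weight $1/\sin\theta$ versus $\sin\theta$ manifests itself and produces the extra additive constant $1$. Second, I would invoke the Alt--Caffarelli--Friedman type lower bound from \cite{cs} (the discussion following (12.8) there) for the unweighted operator, which gives
\[
\sqrt{\lambda^+(\Gamma)}+\sqrt{\lambda^+(\partial B_1^+\setminus\Gamma)}\ge 2.
\]
Chaining these two estimates through $\sqrt{1+\lambda^+(\partial B_1^+\setminus\Gamma)}\ge \sqrt{\lambda^+(\partial B_1^+\setminus\Gamma)}$ produces $\sqrt{\lambda^+(\Gamma)}+\sqrt{\lambda^-(\partial B_1^+\setminus\Gamma)}\ge 2$ for the chosen $\Gamma$.

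Finally, since this lower bound is independent of the particular open $\Gamma$, taking the infimum over $\Gamma\subseteq \partial B_1^+$ on the left-hand side preserves the inequality and yields $\beta^*\ge 2$ directly from the definition \eqref{beta*}.

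The only substantive content is therefore borrowed: the Hardy-type step (3) and the Caffarelli--Salsa rearrangement bound. The main (and genuinely nontrivial) obstacle was already overcome in item (3), namely converting an inequality for the $1/x_1$-weighted Rayleigh quotient into one for the $x_1$-weighted quotient via the substitution $g=\sin\theta\,f$; once that identity is in hand, the lemma itself is just a matter of chaining inequalities and passing to the infimum. I would not attempt to improve on the bound (e.g. to strict inequality or to the question of attainment at a connected arc), since the authors explicitly leave that as an open conjecture.
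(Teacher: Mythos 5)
Your proof is correct and is essentially identical to the paper's: the paper's one-line proof simply cites \eqref{lambda2}, which is established in items (3) and (4) of Section~\ref{notion} via the same two ingredients you use (the substitution $g=\sin\theta\,f$ giving $\lambda^-(\Gamma)\ge 1+\lambda^+(\Gamma)$, and the Caffarelli--Salsa bound $\sqrt{\lambda^+(\Gamma)}+\sqrt{\lambda^+(\partial B_1^+\setminus\Gamma)}\ge 2$). Passing to the infimum over $\Gamma$ in the definition of $\beta^*$ then gives the claim, exactly as you describe.
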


\begin{proof} Follows from (\ref{lambda2}).
\end{proof}

\begin{theorem}\label{T:ACF}
Let $u \in V^{1,2}(\Omega)\cap C(\Omega\cap\{x_1>0\})\cap C^2(\Omega\cap\{u\neq 0\}\cap\{x_1>0\})$ solve \eqref{u} and be such that
\[
\lim\limits_{\substack{x\rightarrow x^0, \\ x\in \Om\cap\{u>0\}}}x_1u\D_1u =0 \  \ \ \ \ \text{and} \ \ \ \ \
\lim\limits_{\substack{x\rightarrow x^0, \\ x\in \Om\cap\{u<0\}}}u\frac{\D_1u}{x_1} =0,
\]
for any $x^0\in \Om\cap \{x_1=0\}$. Suppose that $0\in \Omega$ and let $r_0>0$ be such that $B_{r_0}^+\subset \Omega$. Let $\Phi:(0,r_0)\to\R$ be given by
\begin{equation}\label{ACF2}
 \Phi(r)=\Phi(r,u^+,u^-):=\frac{1}{r^{2\beta^*}}\int_{B_r^+}x_1|\nabla u^+|^2dx\int_{B_r^+}\frac{|\nabla u^-|^2}{x_1}dx\quad\text{for all }r\in (0,r_0).
\end{equation}
Then $r\mapsto \Phi(r)$ is nondecreasing on $(0,r_0)$.
\end{theorem}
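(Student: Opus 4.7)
The plan is to show $\Phi'\ge 0$ almost everywhere on $(0,r_0)$ and conclude by absolute continuity. I write $I_+(r):=\int_{B_r^+}x_1|\nabla u^+|^2\,dx$ and $I_-(r):=\int_{B_r^+}\frac{1}{x_1}|\nabla u^-|^2\,dx$; both are absolutely continuous, with $I_+'(r)=\int_{\partial B_r^+}x_1|\nabla u^+|^2\,dS$ (and analogously for $I_-$) at a.e.\ $r$ by the coarea formula. Setting aside the trivial case where one of the factors vanishes on a subinterval, the inequality $\Phi'(r)\ge 0$ is equivalent to
\[
\frac{I_+'(r)}{I_+(r)}+\frac{I_-'(r)}{I_-(r)}\ge \frac{2\beta^*}{r}.
\]

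The key step is to bound each summand from below by $2\sqrt{\lambda^\pm(\Gamma^\pm(r))}/r$ for suitable open arcs $\Gamma^\pm(r)\subseteq \partial B_1^+$. For $I_+$, integration by parts on $B_r^+\cap\{x_1>\veps\}$ applied to $\div(x_1 u^+\nabla u^+)=x_1|\nabla u^+|^2$ (using $\div(x_1\nabla u^+)=0$ in $\{u^+>0\}$), followed by letting $\veps\to 0^+$ and dropping the contribution on $\{x_1=\veps\}$ thanks to the hypothesis $x_1u\,\partial_1 u\to 0$ on the axis, yields
\[
I_+(r)=\int_{\partial B_r^+}x_1\,u^+\,\partial_r u^+\,dS.
\]
Cauchy--Schwarz now gives $I_+(r)^2\le T_+(r)\int_{\partial B_r^+}x_1(\partial_r u^+)^2\,dS$, where $T_+(r):=\int_{\partial B_r^+}x_1(u^+)^2\,dS$. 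For the tangential contribution, the rescaling $w(\theta):=u^+(r\sin\theta,r\cos\theta)$ on $\partial B_1^+$ yields
\[
\int_{\partial B_r^+}x_1|\nabla_\tau u^+|^2\,dS=\int_{\partial B_1^+}x_1|\nabla_\tau w|^2\,dS,\qquad T_+(r)=r^2\int_{\partial B_1^+}x_1 w^2\,dS,
\]
so the variational definition of $\lambda^+$ applied on $\Gamma^+(r):=\{w>0\}\cap\partial B_1^+$ gives $\int_{\partial B_r^+}x_1|\nabla_\tau u^+|^2\,dS\ge r^{-2}\lambda^+(\Gamma^+(r))\,T_+(r)$. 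Splitting $|\nabla u^+|^2=(\partial_r u^+)^2+|\nabla_\tau u^+|^2$ on $\partial B_r^+$ and applying AM--GM then gives
\[
I_+'(r)\ge \frac{I_+(r)^2}{T_+(r)}+\frac{\lambda^+(\Gamma^+(r))}{r^2}T_+(r)\ge \frac{2\,I_+(r)\sqrt{\lambda^+(\Gamma^+(r))}}{r},
\]
whence $I_+'(r)/I_+(r)\ge 2\sqrt{\lambda^+(\Gamma^+(r))}/r$. The argument for $I_-$ with the weight $1/x_1$ is formally identical, using the hypothesis $u\,\partial_1 u/x_1\to 0$ to discard the axis boundary term, and produces $I_-'(r)/I_-(r)\ge 2\sqrt{\lambda^-(\Gamma^-(r))}/r$ with $\Gamma^-(r):=\{w<0\}\cap\partial B_1^+$.

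Finally, since $\Gamma^+(r)$ and $\Gamma^-(r)$ are disjoint open sets, extension by zero shows $\lambda^-(\Gamma^-(r))\ge \lambda^-(\partial B_1^+\setminus \Gamma^+(r))$, and the definition of $\beta^*$ yields
\[
\sqrt{\lambda^+(\Gamma^+(r))}+\sqrt{\lambda^-(\Gamma^-(r))}\ge \sqrt{\lambda^+(\Gamma^+(r))}+\sqrt{\lambda^-(\partial B_1^+\setminus \Gamma^+(r))}\ge \beta^*,
\]
so $\Phi'(r)/\Phi(r)\ge -2\beta^*/r+2\beta^*/r=0$. The main obstacle is the rigorous justification of the integration by parts across $\{x_1=0\}$: it is precisely the pointwise decay assumptions on $x_1u\,\partial_1 u$ and $u\,\partial_1 u/x_1$ that force the correction on $\{x_1=\veps\}$ to vanish, and cruder one-sided trace bounds would not suffice because of the degenerate/singular weight.
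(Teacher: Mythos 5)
Your proposal follows the paper's argument in all essential respects: the same integration-by-parts identity $I_\pm(r)=\int_{\partial B_r^+}x_1^{\pm 1}u^\pm\,\partial_r u^\pm\,dS$ (with the hypotheses on $x_1 u\,\partial_1 u$ and $u\,\partial_1 u/x_1$ killing the axis contribution), the same Cauchy--Schwarz bound, the same Rayleigh-quotient bound on the tangential energy in terms of $\lambda^\pm$, the same AM--GM step producing $2\sqrt{\lambda^\pm}/r$, and the same appeal to the definition of $\beta^*$ (via monotonicity $\lambda^-(\Gamma^-)\ge\lambda^-(\partial B_1^+\setminus\Gamma^+)$). The only difference is bookkeeping: you apply AM--GM at the very end to the two scalar lower bounds after the quantity $T_+(r)$ has been isolated, whereas the paper applies it one step earlier to the split $|\nabla w^\pm|^2=(\partial_r w^\pm)^2+|\nabla_\tau w^\pm|^2$ on the rescaled boundary integral; both routes cancel the same intermediate quantity and yield the identical estimate.
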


\begin{proof} Note that, since the functions
\[
r\mapsto \int_{B_r^+} x_1|\nabla u^+|^2 dS \quad\text{ and }\quad
r\mapsto \int_{B_r^+}\frac{|\nabla u^-|^2}{x_1}dS
\]
are absolutely continuous on $(0,r_0)$, we obtain that, for a.e. $r\in (0,r_0)$,
\begin{align*}
\Phi'(r)&=\frac{1}{r^{2\beta^*}}\int_{\D B_r^+} x_1|\nabla u^+|^2dS\int_{B_r^+}\frac{|\nabla u^-|^2}{x_1}dx\\
&+ \frac{1}{r^{2\beta^*}}\int_{B_r^+}x_1|\nabla u^+|^2dx \int_{\D B_r^+}\frac{|\nabla u^-|^2}{x_1}dS\\
& -\frac{2\beta^*}{r^{2\beta^*+1}}\int_{B_r^+}x_1|\nabla u^+|^2dx\int_{B_r^+}\frac{|\nabla u^-|^2}{x_1}dx.
\end{align*}
It follows that, for a.e. $r\in (0,r_0)$,
\begin{equation}\label{phi'}
\frac{r\Phi'(r)}{\Phi(r)}= \frac{r\int_{\D B_r^+}x_1|\nabla u^+|^2dS}{\int_{B_r^+}x_1|\nabla u^+|^2dx} +\frac{r\int_{\D B_r^+}\frac{|\nabla u^-|^2}{x_1}dS}{\int_{B_r^+}\frac{|\nabla u^-|^2}{x_1}dx} -2\beta^*,
\end{equation}
and it suffices to prove that this quantity is nonnegative.

We first write the above denominators in a different form. Since $\text{div}(x_1\nabla u)=0$ in $\{u>0\}$, if we let $\varepsilon\rightarrow 0$ in the following integration by parts formula
\[
\int_{B_r^+\cap \{x_1>\varepsilon\}}x_1\langle \nabla u,\nabla \max(u-\varepsilon,0)^{1+\varepsilon}\rangle dx =\int_{\D B_r^+\cap \{x_1>\varepsilon\}}x_1\max(u-\varepsilon,0)^{1+\varepsilon}\langle \nabla u,\nu\rangle dS,
\]
we obtain
\begin{align*}
\int_{B_r^+}x_1|\nabla u^+|^2dx=\int_{\D B_r^+}x_1u^+\langle \nabla u^+,\nu\rangle dS.
\end{align*}
Similarly, using the fact that  $\text{div}\left(\frac{1}{x_1}\nabla u\right)=0$ in $\{u<0\}$, we obtain
\begin{align*}
\int_{B_r^+}\frac{1}{x_1}|\nabla u^-|^2 dx =\int_{\D B_r^+}\frac{1}{x_1}u^-\langle \nabla u^-,\nu\rangle dS.
\end{align*}

We therefore obtain from (\ref{phi'}) that, for a.e. $r\in (0, r_0)$,
\[
\frac{r\Phi'(r)}{\Phi(r)}= \frac{r\int_{\D B_r^+}x_1|\nabla u^+|^2dS}{\int_{\D B_r^+}x_1u^+\langle \nabla u^+,\nu\rangle dS} +\frac{r\int_{\D B_r^+}\frac{|\nabla u^-|^2}{x_1}dS}{ \int_{\D B_r^+}\frac{1}{x_1}u^-\langle \nabla u^-,\nu\rangle dS} -2\beta^*,
\]

In what follows, let us fix an arbitrary point $r\in (0,r_0)$ of differentiability for $\Phi$, and define $w\in V^{1,2} (B^+_{r_0/r})$ by $w(x)= u(rx)$ for all $x\in B^+_{r_0/r}$. Then
\be \label{phi'n}
\frac{r\Phi'(r)}{\Phi(r)}= \frac{\int_{\D B_1^+}x_1|\nabla w^+|^2dS}{\int_{\D B_1^+}x_1w^+\langle \nabla w^+,\nu\rangle dS} +\frac{\int_{\D B_1^+}\frac{|\nabla w^-|^2}{x_1}dS}{ \int_{\D B_1^+}\frac{1}{x_1}w^-\langle \nabla w^-,\nu\rangle dS} -2\beta^*.
\ee

Let us denote, for a.e. (with respect to $dS$) $x\in \D B_1^+$, $\partial_r w^{\pm}(x):=\langle \nabla w^{\pm}(x),\nu(x)\rangle$. Then, the Cauchy-Schwarz Inequality yields
\begin{equation}\label{denum+}
\int_{\D B_1^+}x_1w^+\partial_r w^+dS\\
\le \left(\int_{\D B_1^+} x_1(w^+)^2dS\right)^{1/2}\left(\int_{\D B_1^+}x_1(\partial_r w^+)^2dS\right)^{1/2},
\end{equation}
and
\begin{equation}\label{denum-}
\int_{\D B_1^+}\frac{1}{x_1} w^- \partial_r w^- dS \le \left(\int_{\D B_1^+}\frac{(w^-)^2}{x_1}dS\right)^{1/2}\left(\int_{\D B_1^+}\frac{(\partial_r w^-)^2}{x_1}dS\right)^{1/2}.
\end{equation}
On the other hand, since $|\nabla w^\pm|^2=|\partial_r w^\pm|^2+|\nabla_{\tau}w^\pm|^2$, it follows that
\begin{equation}\label{num+}
\begin{aligned}
\int_{\D B_1^+}x_1|\nabla w^+|^2 dS&= \int_{\D B_1^+}\left(x_1|\partial_r w^+|^2+x_1|\nabla_{\tau}w^+|^2\right)dS\\
&\ge 2\left(\int_{\D B_1^+} x_1(\partial_r w^+)^2dS\right)^{1/2}\left(\int_{\D B_1^+}x_1|\nabla_{\tau} w^+|^2dS\right)^{1/2}
\end{aligned}
\end{equation}
and
\begin{equation}\label{num-}
\int_{\D B_1^+}\frac{|\nabla w^-|^2}{x_1}dS\ge 2\left(\int_{\D B_1^+}\frac{(\partial_r w^-)^2}{x_1}dS\right)^{1/2}\left(\int_{\D B_1^+}\frac{|\nabla_{\tau}w^-|^2}{x_1}dS\right)^{1/2}.
\end{equation}

Using now the estimates \eqref{num+}, \eqref{num-}, \eqref{denum+} and \eqref{denum-} in \eqref{phi'}, we obtain that
\begin{equation}\label{phi'1}
\begin{aligned}
\frac{r\Phi'(r)}{\Phi(r)}&\ge \frac{\int_{\D B_1^+}\left(x_1|\partial_r w^+|^2+x_1|\nabla_{\tau}w^+|^2\right)dS}{\left(\int_{\D B_1^+} x_1(w^+)^2dS\right)^{1/2}\left(\int_{\D B_1^+}x_1(\partial_r w^+)^2dS\right)^{1/2}}\\
& +\frac{\int_{\D B_1^+}\left(\frac{(\partial_r w^-)^2}{x_1}+\frac{|\nabla_{\tau}w^-|^2}{x_1}\right)dS}{\left(\int_{\D B_1^+}\frac{(w^-)^2}{x_1}dS\right)^{1/2}\left(\int_{\D B_1^+}\frac{(\partial_r w^-)^2}{x_1}dS\right)^{1/2}} -2\beta^*\\
&\ge \frac{2\left(\int_{\D B_1^+} x_1(\partial_r w^+)^2dS\right)^{1/2}\left(\int_{\D B_1^+}x_1|\nabla_{\tau} w^+|^2dS\right)^{1/2}}{\left(\int_{\D B_1^+} x_1(w^+)^2dS\right)^{1/2}\left(\int_{\D B_1^+}x_1(\partial_r w^+)^2dS\right)^{1/2}}\\
&+\frac{2\left(\int_{\D B_1^+}\frac{(\partial_r w^-)^2}{x_1}dS\right)^{1/2}\left(\int_{\D B_1^+}\frac{|\nabla_{\tau}w^-|^2}{x_1}dS\right)^{1/2}}{\left(\int_{\D B_1^+}\frac{(w^-)^2}{x_1}dS\right)^{1/2}\left(\int_{\D B_1^+}\frac{(\partial_r w^-)^2}{x_1}dS\right)^{1/2}} -2\beta^*\\
&=\frac{2\left(\int_{\D B_1^+}x_1|\nabla_{\tau} w^+|^2dS\right)^{1/2}}{\left(\int_{\D B_1^+}x_1(w^+)^2dS\right)^{1/2}}+\frac{2\left(\int_{\D B_1^+}\frac{|\nabla_{\tau}w^-|^2}{x_1}dS\right)^{1/2}}{\left(\int_{\D B_1^+}\frac{(w^-)^2}{x_1}dS\right)^{1/2}} -2\beta^*.
\end{aligned}
\end{equation}

Recalling the definitions in \eqref{lambda}, we have therefore obtained that
\begin{equation}\label{char}
\frac{r\Phi'(r)}{\Phi(r)}\ge 2\left(\sqrt{\lambda^+(\D B_1^+\cap\{w>0\})}+\sqrt{\lambda^-(\D B_1^+\cap\{w<0\})}-\beta^*\right),
\end{equation}
and the definition of $\beta^*$ implies that $\Phi'(r)\geq 0$. Since $r$ was an arbitrary differentiability point for $\Phi$ in $(0,r_0)$, the claimed monotonicity of $\Phi$ follows.
\end{proof}

\begin{corollary}\label{C:ACFzero} Given $0\le\alpha < \beta^*$, where $\beta^*$ is defined as in \eqref{beta*},
\[
\frac{1}{r^{2\alpha}}\int_{B_r^+}x_1|\nabla u^+|^2dx\int_{B_r^+}\frac{1}{x_1}|\nabla u^-|^2dx\rightarrow 0 \ \text{ as } r\rightarrow 0.
\]
By Lemma \ref{L:beta*}, this holds in particular for $0\le\alpha < 2$.
\end{corollary}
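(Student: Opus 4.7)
The plan is to reduce the claim to the monotonicity of $\Phi$ obtained in Theorem \ref{T:ACF}. The key observation is that the quantity whose limit is to be computed is just $r^{2(\beta^*-\alpha)}\Phi(r)$: indeed, from the definition \eqref{ACF2},
\[
\frac{1}{r^{2\alpha}}\int_{B_r^+}x_1|\nabla u^+|^2\,dx\int_{B_r^+}\frac{1}{x_1}|\nabla u^-|^2\,dx \;=\; r^{2(\beta^*-\alpha)}\,\Phi(r).
\]
So the problem splits into two pieces: controlling $\Phi(r)$ from above, and controlling the explicit prefactor.

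For the first piece, Theorem \ref{T:ACF} gives that $\Phi$ is nondecreasing on $(0,r_0)$, hence $\Phi(r)\le \Phi(r_0)$ for all $r\in(0,r_0)$. Since $u\in V^{1,2}(\Omega)$ and $B_{r_0}^+\subset\Omega$, each of the two factors in $\Phi(r_0)$ is finite by the very definition of the weighted Sobolev space, so $\Phi(r_0)<\infty$. For the second piece, the hypothesis $\alpha<\beta^*$ gives $\beta^*-\alpha>0$, hence $r^{2(\beta^*-\alpha)}\to 0$ as $r\to 0^+$. Combining the two,
\[
\frac{1}{r^{2\alpha}}\int_{B_r^+}x_1|\nabla u^+|^2\,dx\int_{B_r^+}\frac{1}{x_1}|\nabla u^-|^2\,dx \;\le\; r^{2(\beta^*-\alpha)}\,\Phi(r_0) \;\longrightarrow\; 0.
\]
Finally, the last sentence of the statement is just the remark that Lemma \ref{L:beta*} guarantees $\beta^*\ge 2$, so any $\alpha\in[0,2)$ satisfies $\alpha<\beta^*$.

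There is essentially no obstacle here: the entire content has already been placed in the monotonicity formula, and the corollary is a one-line consequence once the target is rewritten as $r^{2(\beta^*-\alpha)}\Phi(r)$. The only mildly delicate point to check — but one that is immediate from the definition of $V^{1,2}(\Omega)$ — is that $\Phi(r_0)$ is finite, which rules out the degenerate scenario in which the monotone bound would be vacuous.
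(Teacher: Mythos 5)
Your proof is correct and is exactly the argument the paper intends when it calls the corollary an ``immediate consequence'' of Theorem~\ref{T:ACF}: rewrite the target as $r^{2(\beta^*-\alpha)}\Phi(r)$, use monotonicity to bound $\Phi(r)\le\Phi(r_0)<\infty$, and let the positive power $r^{2(\beta^*-\alpha)}\to 0$. The paper leaves these details implicit; you have merely made them explicit.
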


\begin{proof} This is an immediate consequence of Theorem \ref{T:ACF}.

\end{proof}

\begin{corollary}\label{C:blowups} Given $0< 2\alpha+1 <\beta^*$, let $0<r_m\rightarrow 0+$ be a sequence such that the blow-up sequence
\begin{equation}\label{um}
u_m(x):=\frac{u^+(r_mx)}{r_m^{\alpha}}+ \frac{u^-(r_mx)}{r_m^{\alpha+1}}
\end{equation}
converges weakly in $V^{1,2}_{\textnormal{loc}}(\R^n_+)$ to a blow up limit $u_0$. Then either $u_0^+\equiv 0$ or $u_0^-\equiv 0$.
\end{corollary}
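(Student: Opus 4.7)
The plan is to extract the dichotomy directly from the Alt--Caffarelli--Friedman vanishing statement in Corollary \ref{C:ACFzero}. First I would rescale the two factors of the ACF product according to the blow-up \eqref{um}. A direct change of variables $y = r_m x$, using $x_1 = y_1/r_m$ and $dx = dy/r_m^2$, gives for every $R>0$
\[
\int_{B_R^+} x_1 |\nabla u_m^+|^2 \, dx = \frac{1}{r_m^{2\alpha+1}} \int_{B_{r_m R}^+} y_1 |\nabla u^+|^2 \, dy,
\]
\[
\int_{B_R^+} \frac{|\nabla u_m^-|^2}{x_1} \, dx = \frac{1}{r_m^{2\alpha+1}} \int_{B_{r_m R}^+} \frac{|\nabla u^-|^2}{y_1} \, dy.
\]
Multiplying and inserting the factor $R^{4\alpha+2}/R^{4\alpha+2}$ produces
\[
\int_{B_R^+} x_1 |\nabla u_m^+|^2 \, dx \cdot \int_{B_R^+} \frac{|\nabla u_m^-|^2}{x_1} \, dx = R^{4\alpha+2} \cdot \frac{1}{(r_m R)^{4\alpha+2}} \int_{B_{r_m R}^+} y_1 |\nabla u^+|^2 \, dy \int_{B_{r_m R}^+} \frac{|\nabla u^-|^2}{y_1} \, dy.
\]
Since $2\alpha+1 < \beta^*$ by hypothesis, Corollary \ref{C:ACFzero} applied with the exponent $2\alpha+1$ in place of $\alpha$ shows that the right-hand side tends to $0$ as $m \to \infty$.

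Next I would pass to the weak limit. Writing $a_m^R$ and $b_m^R$ for the two factors on the left-hand side above, the previous step yields $a_m^R b_m^R \to 0$ for every fixed $R$. Weak convergence $u_m \rightharpoonup u_0$ in $V^{1,2}_{\text{loc}}(\R^n_+)$ combined with Rellich compactness produces, along a further subsequence, $u_m \to u_0$ almost everywhere, hence $u_m^\pm \to u_0^\pm$ a.e., and $\nabla u_m^\pm \rightharpoonup \nabla u_0^\pm$ weakly in $L^2_{\text{loc}}(\R^n_+)$ (the weight singularity on $\{x_1=0\}$ being a null set). Standard weak lower semi-continuity of the weighted $L^2$ seminorm then gives
\[
A(R) := \int_{B_R^+} x_1 |\nabla u_0^+|^2 \, dx \leq \liminf_m a_m^R, \qquad B(R) := \int_{B_R^+} \frac{|\nabla u_0^-|^2}{x_1} \, dx \leq \liminf_m b_m^R,
\]
and therefore $A(R) B(R) \leq (\liminf_m a_m^R)(\liminf_m b_m^R) \leq \liminf_m (a_m^R b_m^R) = 0$ for every $R > 0$.

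Finally I would upgrade this pointwise vanishing to a global dichotomy by exploiting monotonicity in $R$. Both $A$ and $B$ are non-decreasing, and $A(R)B(R)=0$ for every $R$. If there were $R_1, R_2$ with $A(R_1) > 0$ and $B(R_2) > 0$, then for $R \geq \max(R_1,R_2)$ both factors would be strictly positive, contradicting $A(R)B(R) = 0$. Hence either $A \equiv 0$ or $B \equiv 0$ on $(0,\infty)$. In the first case $\nabla u_0^+ = 0$ almost everywhere in the connected set $\R^n_+$, so $u_0^+ \equiv c$ for some constant $c \geq 0$; but $u_0^+ = \max(u_0,0)$, so if $c > 0$ then $u_0 \equiv c > 0$ a.e.\ and consequently $u_0^- \equiv 0$, while if $c = 0$ then $u_0^+ \equiv 0$ directly. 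The case $B \equiv 0$ is symmetric. The step I expect to be the most delicate is ensuring that the passage to positive/negative parts is compatible with the weighted weak convergence, so that the lower semi-continuity used in the middle paragraph is fully justified; once that routine verification is secured, the remaining argument is just scaling, compactness, and monotonicity in $R$.
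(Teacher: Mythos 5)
Your proof is correct and follows essentially the same route as the paper: rescale the Alt--Caffarelli--Friedman product, invoke the vanishing from Corollary \ref{C:ACFzero} (which the paper uses in the equivalent form $\Phi(s,u_m^+,u_m^-) = r_m^{2(\beta^* - 2\alpha - 1)}\Phi(r_m s, u^+, u^-) \to 0$ via the monotonicity of Theorem \ref{T:ACF}), and conclude via lower semicontinuity of the two weighted Dirichlet energies that the limit product vanishes for every $R$, hence $u_0$ is one-phase. You are somewhat more explicit than the paper about the passage to positive/negative parts under weak convergence and about using monotonicity in $R$ to upgrade pointwise vanishing of the product to a global dichotomy, but these are the same steps spelled out, not a different argument.
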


\begin{proof} We have that
\begin{align*}
\Phi(s,u_m^+,u_m^-)&= \frac{1}{s^{2\beta^*}}\int_{B_s^+}x_1|\nabla u_m^+|^2dx\int_{B_s^+}\frac{1}{x_1}|\nabla u_m^-|^2dx\\
&=\frac{r_m^{-2(2\alpha+1)}}{s^{2\beta^*}}\int_{B_{sr_m}^+}x_1|\nabla u^+|^2dx\int_{B_{sr_m}^+}\frac{1}{x_1}|\nabla u^-|^2dx\\
&=r_m^{2(\beta^*-2\alpha-1)}\Phi(r_ms,u^+,u^-).
\end{align*}
Since $\Phi(s,u_0^+,u_0^-)\le \liminf_{m\rightarrow +\infty}\Phi(s,u^+_m,u^-_m)$, and the right-hand side of the above expression converges to $0$ as $r_m\rightarrow 0$ (since the limit $\lim\limits_{t\rightarrow 0+}\Phi(t,u^+,u^-)$ exists by Theorem \ref{T:ACF}), we conclude that $u_0$ is one-phase.
\end{proof}

\section{Degenerate points}\label{S:weiss}

In this section we prove a version of the monotonicity formula in \cite{weisscpde},
\cite{vwacta} and \cite{vwcpam} adapted to our framework. As the computations hold in any dimension, we present the next theorem in this setting.

\begin{theorem}[Monotonicity formula]\label{T:mono} Let $u$ be a variational solution of \eqref{intro_sol}, let $x^0\in \Omega$ be such that $x_1^0=0$ and let
$\delta:=\text{dist}(x^0,\partial\Omega)/2$. Let, for any $r\in (0,\delta)$
\begin{equation}\label{I}
I(r)=\int_{B_r^+(x^0)} \left( x_1|\nabla u^+|^2+\frac{1}{x_1}|\nabla u^-|^2\right)dx,
\end{equation}

\begin{equation}\label{I+}
I_+(r)=\int_{B_r^+(x^0)} x_1|\nabla u^+|^2dx,
\end{equation}

\begin{equation}\label{I-}
I_-(r)=\int_{B_r^+(x^0)} \frac{1}{x_1}|\nabla u^-|^2dx,
\end{equation}

\begin{equation}\label{J+}
J_+(r)=\int_{\partial B_r^+(x^0)}x_1(u^+)^2 dS,
\end{equation}

\begin{equation}\label{J-}
J_-(r)=\int_{\partial B_r^+(x^0)}\frac{1}{x_1}(u^-)^2 dS.
\end{equation}

Given $\beta>0$, we define
\begin{equation}\label{M}
M(r)=r^{-2\beta-n+1}I(r)-\beta r^{-2\beta-n}J_+(r)-(\beta+1) r^{-2\beta-n}J_-(r).
\end{equation}
Then, for a.e. $r\in (0,\delta)$,
\begin{equation}\label{M'}
\begin{aligned}
M'(r)= & 2r^{-2\beta-n+1}\int_{\D B_r^+(x^0)}\Big[x_1\left(\langle \nabla u^+,\nu\rangle-\frac{\beta}{r}u^+\right)^2\\
&+\frac{1}{x_1}\left(\langle \nabla u^-,\nu\rangle-\frac{\beta+1}{r}u^-\right)^2\Big]dS.
\end{aligned}
\end{equation}
\end{theorem}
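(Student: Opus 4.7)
The plan is to differentiate $M(r)$ directly and then eliminate the non-sign-definite terms using three ingredients, so as to arrive at a sum of two completed squares. First, coarea yields $I'(r)=\int_{\partial B_r^+(x^0)}(x_1|\nabla u^+|^2+\frac{1}{x_1}|\nabla u^-|^2)\,dS$; writing $J_\pm$ in polar coordinates centered at $x^0$ (using $x_1^0=0$, so that $x_1=r\omega_1$ on $\partial B_r^+(x^0)$) and differentiating in $r$ gives
\begin{equation*}
J_+'(r)=\tfrac{n}{r}J_+(r)+2\hat I_+(r),\qquad J_-'(r)=\tfrac{n-2}{r}J_-(r)+2\hat I_-(r),
\end{equation*}
where I set $\hat I_+(r):=\int_{\partial B_r^+(x^0)}x_1 u^+\partial_r u^+\,dS$ and $\hat I_-(r):=\int_{\partial B_r^+(x^0)}\frac{1}{x_1}u^-\partial_r u^-\,dS$. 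Second, repeating the $\max(u-\varepsilon,0)^{1+\varepsilon}$ approximation from the proof of Theorem~\ref{T:ACF} (and its symmetric analog for $u^-$), and invoking the axis hypothesis together with the vanishing of $u^\pm$ on $\partial\{u>0\}$, gives $I_\pm(r)=\hat I_\pm(r)$.

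The third and crucial ingredient is a domain-variation identity obtained by applying the first variation of $E$ to the radial cut-off vector field $\phi_\eta(x)=\eta(|x-x^0|)(x-x^0)$, which is admissible since $(\phi_\eta)_1=x_1\eta$ vanishes on $\{x_1=0\}$. An elementary calculation gives $\mathrm{div}\,\phi_\eta=n\eta+|x-x^0|\eta'$ and $\nabla u^\pm D\phi_\eta\nabla u^\pm=|\nabla u^\pm|^2\eta+|x-x^0|\eta'(\partial_r u^\pm)^2$; collecting coefficients of $\eta$ produces $n-2+1=n-1$ multiplying $x_1|\nabla u^+|^2$ and $n-2-1=n-3$ multiplying $\frac{1}{x_1}|\nabla u^-|^2$ (the extra $\pm 1$ coming from the $(\phi_\eta)_1=x_1\eta$ term in the first variation). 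Passing to the limit $\eta\to\chi_{[0,r]}$, so that $|x-x^0|\eta'\to -r\,dS$ on $\partial B_r^+(x^0)$, yields
\begin{equation*}
rI'(r)=(n-1)I_+(r)+(n-3)I_-(r)+2r\!\int_{\partial B_r^+(x^0)}\!\Big[x_1(\partial_r u^+)^2+\tfrac{1}{x_1}(\partial_r u^-)^2\Big]dS.
\end{equation*}
The coefficients $n-1$ versus $n-3$, differing by $2$, are the precise source of the asymmetric exponents $\beta$ for $u^+$ and $\beta+1$ for $u^-$.

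Substituting these three ingredients into the direct differentiation of \eqref{M} and collecting terms using $I=\hat I_++\hat I_-$, one finds that the coefficient of $\hat I_+$ equals $-4\beta\,r^{-2\beta-n}$, of $\hat I_-$ equals $-4(\beta+1)\,r^{-2\beta-n}$, of $J_+$ equals $2\beta^2\,r^{-2\beta-n-1}$, and of $J_-$ equals $2(\beta+1)^2\,r^{-2\beta-n-1}$, together with the pure boundary term $2r^{-2\beta-n+1}\int_{\partial B_r^+(x^0)}[x_1(\partial_r u^+)^2+\frac{1}{x_1}(\partial_r u^-)^2]\,dS$. These are exactly the terms obtained by expanding the squares $x_1(\partial_r u^+-\frac{\beta}{r}u^+)^2$ and $\frac{1}{x_1}(\partial_r u^--\frac{\beta+1}{r}u^-)^2$ and integrating over $\partial B_r^+(x^0)$, which gives \eqref{M'}. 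The main obstacle is the careful justification of the integration by parts and of the admissibility of $\phi_\eta$ up to the singular axis $\{x_1=0\}$ and across the free boundary $\partial\{u>0\}$; the axis hypothesis together with the $\max(u-\varepsilon,0)^{1+\varepsilon}$ scheme inherited from Theorem~\ref{T:ACF} handles both issues, so that the remainder is a careful accounting of coefficients.
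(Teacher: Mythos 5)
Your proof is correct and follows essentially the same route as the paper: you establish the three ingredients given in Lemmas~\ref{L:I}, \ref{L:J'} and \ref{L:I'} (the Green-type identity $I_\pm=\hat I_\pm$, the derivatives of $J_\pm$ with the $n/r$ and $(n-2)/r$ weights coming from the $x_1$ and $1/x_1$ factors, and the domain-variation identity for $I'$ with asymmetric coefficients $n-1$ and $n-3$), then differentiate $M(r)$ directly and recognize the completed squares. The coefficient bookkeeping you describe matches the paper's computation exactly.
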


We will split the proof of this result into several Lemmas where we compute the derivatives of $I(r)$ and $J(r)$ for arbitrary $x^0\in \Omega$. We start by rewriting $I(r)$.

\begin{lemma}\label{L:I} Let $u$ be a variational solution of \eqref{intro_sol}, let $x^0\in \Omega$ and let $\delta=\text{dist}(x^0,\partial\Omega)/2$. Then for $r\in(0,\delta)$,
\begin{equation}\label{partsI}
I(r)=\int_{\partial B_r^+(x^0)} \left( x_1u^+\langle \nabla u^+,\nu\rangle +\frac{1}{x_1}u^-\langle \nabla u^-,\nu\rangle\right)dS.
\end{equation}
\end{lemma}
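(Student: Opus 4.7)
The plan is to establish \eqref{partsI} by integrating by parts separately in the positive and negative phases and then summing. In $\{u>0\}\cap\{x_1>0\}$ the equation $\mathrm{div}(x_1\nabla u)=0$ holds in the classical sense, and in $\{u<0\}\cap\{x_1>0\}$ so does $\mathrm{div}(x_1^{-1}\nabla u)=0$; both come from the first-variation identity upon testing with $\phi$ compactly supported inside the respective phase (combined with the $C^2$ regularity posited in the definition of variational solution). A formal multiplication of each PDE by $u$ and application of the divergence theorem in $B_r^+(x^0)\cap\{u>0\}$ and $B_r^+(x^0)\cap\{u<0\}$ would give
\[
I_+(r) = \int_{\partial B_r^+(x^0)} x_1 u^+\langle\nabla u^+,\nu\rangle\,dS \quad\text{and}\quad I_-(r) = \int_{\partial B_r^+(x^0)} \frac{1}{x_1} u^-\langle\nabla u^-,\nu\rangle\,dS,
\]
provided the boundary contributions from the free interface $\partial\{u=0\}\cap B_r^+(x^0)$ (on which $u^\pm=0$) and from the axis slice $B_r(x^0)\cap\{x_1=0\}$ (when $x_1^0=0$) can be shown to vanish.

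To make this rigorous I would mirror the truncation used in the proof of Theorem~\ref{T:ACF}. For the $+$ identity, on $B_r^+(x^0)\cap\{x_1>\varepsilon\}$ I would apply the divergence theorem to the smooth vector field $x_1\max(u-\varepsilon,0)^{1+\varepsilon}\nabla u$; since $\mathrm{div}(x_1\nabla u)=0$ in the relevant open set, the identity reduces to equating $\int x_1(1+\varepsilon)\max(u-\varepsilon,0)^{\varepsilon}|\nabla u|^2\,dx$ with the sum of surface integrals over the spherical cap, the level set $\{u=\varepsilon\}$, and the axis slab $\{x_1=\varepsilon\}\cap B_r(x^0)$. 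As $\varepsilon\to 0^+$, the volume integral rises monotonically to $I_+(r)$; the level-set piece vanishes because $\max(u-\varepsilon,0)^{1+\varepsilon}=0$ there; the axis slab is bounded by $\varepsilon u^+|\partial_1 u|$ and is killed by the variational-solution hypothesis $\lim_{x\to x^0,\,u>0} x_1 u\,\partial_1 u=0$; and the spherical-cap integral converges to $\int_{\partial B_r^+(x^0)} x_1 u^+\langle\nabla u^+,\nu\rangle\,dS$ by dominated convergence, valid for a.e.\ $r$ via Fubini applied to the finite energy $\int_{B_\delta^+(x^0)} x_1|\nabla u^+|^2$. The $-$ identity follows by the parallel computation with weight $1/x_1$, test factor $\max(-u-\varepsilon,0)^{1+\varepsilon}$, and the hypothesis $\lim_{x\to x^0,\,u<0} u\,\partial_1 u/x_1=0$ to dispatch the slab.

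The main obstacle I foresee is upgrading the pointwise axial limits in the definition of variational solution to a uniform bound on the full segment $B_r(x^0)\cap\{x_1=\varepsilon\}$ needed to kill the slab contributions. Because these limits are assumed at every axis point in $\Omega$ and $u\in C^2$ away from $\{u=0\}\cup\{x_1=0\}$, a compactness-plus-covering argument on the compact segment $\overline{B_r(x^0)}\cap\{x_1=0\}$ should promote the pointwise vanishing to the required uniform vanishing. With that in hand, the two one-phase identities sum to \eqref{partsI} for a.e.\ $r\in (0,\delta)$, which is what is needed in the subsequent computation of $M'(r)$ in Theorem~\ref{T:mono}.
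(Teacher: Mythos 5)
Your proposal follows the same route as the paper: truncate with $\max(u-\varepsilon,0)^{1+\varepsilon}$ (and its analogue in the negative phase), cut off at $\{x_1>\varepsilon\}$, integrate by parts on $B_r^+(x^0)\cap\{x_1>\varepsilon\}$, and pass to the limit $\varepsilon\to 0^+$, using the axial-limit hypotheses in the definition of variational solution to kill the slab contribution. The only cosmetic difference is that you treat $I_+(r)$ and $I_-(r)$ in two separate one-phase identities and then add, while the paper writes the truncated Green's identity for both phases simultaneously.
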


\begin{proof} This integration by parts formula follows by letting $\varepsilon\rightarrow 0$ in
\begin{align*}
&\int_{B_r^+(x^0)\cap\{x_1>\varepsilon\}}\left[ x_1\langle \nabla u,\nabla \max(u-\varepsilon,0)^{1+\varepsilon}\rangle+\frac{1}{x_1}\langle \nabla u,\nabla \min(u+\varepsilon,0)^{1+\varepsilon}\rangle \right] dx\\
&= \int_{\partial B_r^+(x^0)\cap\{x_1>\varepsilon\}}\left[ x_1\max(u-\varepsilon,0)^{1+\varepsilon}\langle \nabla u,\nu\rangle+ \frac{1}{x_1}\min(u+\varepsilon,0)^{1+\varepsilon}\langle \nabla u,\nu\rangle \right] dS.
\end{align*}
\end{proof}

\begin{lemma}\label{L:J'} Let $u$ be a variational solution of \eqref{intro_sol} and let $x^0\in \Omega$. Let $\delta=\text{dist}(x^0,\partial\Omega)/2$ in the case $x^0_1=0$ and $\delta=\min\{\text{dist}(x^0,\partial\Omega)/2,x^0_1\}$ when $x^0_1>0$. Then, for a.e. $r\in (0,\delta)$,
\begin{equation}\label{J+'}
\begin{aligned}
(J_+)'(r)&=\frac{n-1}{r}\int_{\D B_r^+(x^0)}x_1(u^+)^2dS+\frac{1}{r}\int_{\D B_r^+(x^0)}(x-x^0)_1(u^+)^2dS\\
&+2\int_{\D B_r^+(x^0)}x_1u^+\langle \nabla u^+,\nu\rangle dS,
\end{aligned}
\end{equation}

\begin{align*}\label{J-'}
(J_-)'(r)&=\frac{n-1}{r}\int_{\D B_r^+(x^0)}\frac{1}{x_1}(u^-)^2dS+2\int_{\D B_r^+(x^0)}\frac{1}{x_1}u^-\langle\nabla u^-,\nu\rangle dS\\
&-\frac{1}{r}\int_{\D B_r^+(x^0)}\frac{(x-x^0)_1}{x_1^2}(u^-)^2dS.
\end{align*}
Therefore, if $J(r)=J_+(r)+J_-(r)$, then
\begin{equation}\label{J'}
J'(r)=(n-1)\frac{J(r)}{r}+2I(r)+ \frac{1}{r}\int_{\partial B_r^+(x^0)}(x-x^0)_1\left[ (u^+(x))^2 -\frac{1}{x_1^2}(u^-(x))^2\right]dS.
\end{equation}
\end{lemma}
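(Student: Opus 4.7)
The plan is to rescale the surface integrals to a fixed reference set, differentiate under the integral sign, and then convert back. For either case in the hypothesis on $\delta$, the half-sphere $\partial B_r^+(x^0)$ can be parametrized as $\{x^0 + r\xi : \xi \in S\}$ where $S$ is an $r$-independent subset of $S^{n-1}$: if $x^0_1 = 0$, then $S = \{\xi \in S^{n-1}: \xi_1 > 0\}$; if $x^0_1 > 0$ and $r < x^0_1$, then $S = S^{n-1}$. This yields
\[
J_+(r) = r^{n-1}\int_S (x^0_1 + r\xi_1)\bigl(u^+(x^0+r\xi)\bigr)^2\, dS_\xi, \qquad J_-(r) = r^{n-1}\int_S \frac{\bigl(u^-(x^0+r\xi)\bigr)^2}{x^0_1 + r\xi_1}\, dS_\xi,
\]
and in both cases the integrand is integrable uniformly in $r$ on compact subintervals, so $J_\pm$ are absolutely continuous on $(0,\delta)$ and differentiable almost everywhere.

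Next I would differentiate in $r$, using the product rule on the three $r$-dependent factors $r^{n-1}$, $x^0_1 + r\xi_1$ (or its reciprocal), and $u^\pm(x^0+r\xi)$. The $r^{n-1}$-factor contributes $(n-1)/r$ times $J_\pm(r)$. The weight factor contributes $r^{n-1}\int_S \xi_1 (u^+)^2\, dS_\xi$ for $J_+$ and $-r^{n-1}\int_S \xi_1 (u^-)^2/(x_1^0+r\xi_1)^2\, dS_\xi$ for $J_-$; after rewriting $\xi_1 = (x-x^0)_1/r$ and undoing the change of variables these become the $\frac{1}{r}\int (x-x^0)_1 (u^+)^2\, dS$ and $-\frac{1}{r}\int (x-x^0)_1 (u^-)^2/x_1^2\, dS$ terms in the statement. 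Finally, the chain-rule derivative of $u^\pm(x^0+r\xi)$ gives $\langle \nabla u^\pm, \xi\rangle$, producing (after undoing the rescaling and recognizing $\xi = \nu$) the $2\int x_1 u^+\langle \nabla u^+,\nu\rangle\, dS$ and $2\int \frac{1}{x_1} u^-\langle \nabla u^-,\nu\rangle\, dS$ terms. This proves the stated formulas for $(J_+)'$ and $(J_-)'$.

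To obtain the formula for $J'(r)$, I would add these two identities and use Lemma \ref{L:I}, which identifies
\[
\int_{\partial B_r^+(x^0)}\Bigl(x_1 u^+\langle \nabla u^+,\nu\rangle + \frac{1}{x_1} u^-\langle \nabla u^-,\nu\rangle\Bigr)\,dS = I(r),
\]
absorbing the two $2\int \cdots$ terms into a single $2I(r)$ contribution. Collecting the $(n-1)/r$ pieces into $(n-1)J(r)/r$ and the two weight-derivative contributions into the single expression $\frac{1}{r}\int_{\partial B_r^+(x^0)}(x-x^0)_1[(u^+)^2 - (u^-)^2/x_1^2]\, dS$ gives \eqref{J'}.

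The main obstacle I anticipate is the justification of differentiation under the integral sign near $\{x_1 = 0\}$, where the weights $x_1$ and $1/x_1$ degenerate or blow up. In the case $x^0_1 = 0$, the factor $\xi_1$ tames the singularity in $J_+$, while for $J_-$ the factor $(u^-)^2/x_1$ must be integrable on the half-sphere; this follows from $u \in V^{1,2}(\Omega)$ combined with the hypothesized boundary behavior $u \partial_1 u/x_1 \to 0$ from the definition of variational solution, which in particular forces $u^-$ to vanish sufficiently fast at $\{x_1=0\}$ to make all the boundary integrals finite and the dominated convergence argument needed for a.e.\ differentiability go through. Once absolute continuity in $r$ is established (which can also be inferred from the coarea formula applied to the $V^{1,2}$ energy density), the pointwise a.e.\ identities follow.
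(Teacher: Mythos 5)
Your proposal is correct and takes essentially the same route as the paper: change variables to a fixed reference cap $S\subset S^{n-1}$, differentiate under the integral sign via the product rule on the three $r$-dependent factors, convert back, and then sum the two identities using Lemma \ref{L:I} to produce the $2I(r)$ term in \eqref{J'}. Your additional attention to justifying differentiation under the integral sign near $\{x_1=0\}$ is a reasonable refinement that the paper leaves implicit.
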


\begin{proof} Assume $x^0_1>0$. We make the change of variables $\frac{x-x^0}{r}=y$ and write
\begin{align*}
& J_+(r)=\int_{\D B_1}(ry+x^0)_1 u^+(ry+x^0)^2r^{n-1} dS, \\
& J_-(r)=\int_{\D B_1}\frac{1}{(ry+x^0)_1}u^-(ry+x^0)^2r^{n-1} dS,
\end{align*}
then
\begin{align*}
(J_+)'(r)&=\int_{\D B_1}y_1u^+(ry+x^0)^2r^{n-1}dS\\
&+(n-1)r^{n-2}\int_{\D B_1}(ry+x^0)_1u^+(ry+x^0)^2dS\\
&+2\int_{\D B_1}(ry+x^0)_1u^+(ry+x^0)\langle\nabla u^+(ry+x^0),y\rangle r^{n-1}dS\\
&=\frac{1}{r}\int_{\D B_r(x^0)}(x-x^0)_1(u^+)^2dS+2\int_{\D B_r(x^0)}x_1u^+\langle\nabla u^+,\nu\rangle dS\\
& +\frac{n-1}{r}\int_{\D B_r(x^0)}x_1(u^+)^2dS,
\end{align*}

\begin{align*}
(J_-)'(r)&= -\int_{\D B_1}\frac{y_1}{(ry+x^0)_1^2}u^-(ry+x^0)^2r^{n-1}dS\\
&+\int_{\D B_1}\frac{n-1}{(ry+x^0)_1}u^-(ry+x^0)^2r^{n-2}dS\\
&+2\int_{\D B_1}\frac{1}{(ry+x^0)_1}u^-(ry+x^0)\langle \nabla u^-(ry+x^0),y\rangle r^{n-1} dS\\
&=-\int_{\D B_r(x^0)}\frac{(x-x^0)_1}{rx_1^2}(u^-)^2dS+2\int_{\D B_r(x^0)}\frac{1}{x_1}u^-\langle \nabla u^-,\nu\rangle dS\\
&+\frac{n-1}{r}\int_{\D B_r(x^0)}\frac{1}{x_1}(u^-)^2dS.
\end{align*}

An analogous, simpler computation follows when $x^0_1=0$ noticing that the integrals should be appropriately computed on $\partial B_r^+$ and $\partial B_1^+$. 

\end{proof}

\begin{lemma}\label{L:I'} Let $u$ be a variational solution of \eqref{intro_sol} and $x^0\in \Omega$. Let $\delta=\text{dist}(x^0,\partial\Omega)/2$ in the case $x^0_1=0$ and $\delta=\min\{\text{dist}(x^0,\partial\Omega)/2,x^0_1\}$ when $x^0_1>0$. Then, for a.e. $r\in (0,\delta)$,
\begin{align*}
I'(r)&=\frac{n-2}{r}I(r)+2\int_{\partial B_r^+(x^0)}\left(x_1\langle \nabla u^+,\nu\rangle^2+\frac{1}{x_1}\langle \nabla u^-,\nu\rangle ^2\right)dS\\
&+\frac{1}{r}\int_{B_r^+(x^0)}(x-x^0)_1\left(|\nabla u^+|^2-\frac{1}{x_1^2}|\nabla u^-|^2\right)dx.
\end{align*}
In particular, if $x^0_1=0$, then
\begin{equation}\label{I'}
I'(r)=2\int_{\partial B_r^+(x^0)}\left(x_1\langle \nabla u^+,\nu\rangle^2+\frac{1}{x_1}\langle \nabla u^-,\nu\rangle ^2\right)dS+\frac{n-1}{r}I_+(r)+\frac{n-3}{r}I_-(r).
\end{equation}
\end{lemma}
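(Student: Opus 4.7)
The plan is to establish Lemma \ref{L:I'} via a Pohozaev-type identity obtained by testing the first-variation formula from the definition of variational solution against the radial vector field $\phi_\varepsilon(x) = (x-x^0)\eta_\varepsilon(|x-x^0|)$, where $\eta_\varepsilon \in C^\infty_c([0,\infty))$ approximates $\chi_{[0,r)}$. The choice of $\delta$ guarantees that $\phi_\varepsilon$ is compactly supported in $\Omega$ and satisfies the admissibility requirement $\phi_\varepsilon^1 = 0$ on $\{x_1=0\}$: when $x^0_1 = 0$ this holds automatically because $\phi_\varepsilon^1 = x_1\eta_\varepsilon$, while when $x^0_1 > 0$ the choice $\delta \le x^0_1$ forces $\eta_\varepsilon$ to vanish near $\{x_1=0\}$.

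A direct calculation using $\phi_\varepsilon^i = (x-x^0)^i\eta_\varepsilon$ yields $\text{div}\,\phi_\varepsilon = n\eta_\varepsilon + |x-x^0|\eta'_\varepsilon$, $(\nabla v)^T D\phi_\varepsilon\nabla v = \eta_\varepsilon|\nabla v|^2 + |x-x^0|\eta'_\varepsilon\langle\nabla v,\nu\rangle^2$ with $\nu = (x-x^0)/|x-x^0|$, and $\phi_\varepsilon^1 = (x-x^0)_1\eta_\varepsilon$. Substituting these into the first-variation identity and grouping terms according to whether they contain $\eta_\varepsilon$ or $\eta'_\varepsilon$ gives
\begin{align*}
0 &= \int \eta_\varepsilon\Bigl[(n-2)\Bigl(x_1|\nabla u^+|^2 + \tfrac{1}{x_1}|\nabla u^-|^2\Bigr) + (x-x^0)_1\Bigl(|\nabla u^+|^2 - \tfrac{1}{x_1^2}|\nabla u^-|^2\Bigr)\Bigr]\,dx \\
&\quad + \int |x-x^0|\eta'_\varepsilon\Bigl[\Bigl(x_1|\nabla u^+|^2 + \tfrac{1}{x_1}|\nabla u^-|^2\Bigr) - 2\Bigl(x_1\langle\nabla u^+,\nu\rangle^2 + \tfrac{1}{x_1}\langle\nabla u^-,\nu\rangle^2\Bigr)\Bigr]\,dx.
\end{align*}

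Letting $\eta_\varepsilon \to \chi_{[0,r)}$ so that $|x-x^0|\eta'_\varepsilon$ concentrates on $\partial B_r^+(x^0)$ with total mass $-r$, the first integral tends to a volume integral over $B_r^+(x^0)$ and, for a.e. $r$, the second tends to $-r$ times the associated boundary integral on $\partial B_r^+(x^0)$. Since $I$ is absolutely continuous with $I'(r) = \int_{\partial B_r^+(x^0)}(x_1|\nabla u^+|^2+\frac{1}{x_1}|\nabla u^-|^2)\,dS$ a.e. by the coarea formula, solving the resulting identity for $I'(r)$ produces the general formula claimed in the lemma. For the specialization $x^0_1=0$, observe that $(x-x^0)_1 = x_1$, so the volume integral reduces to $\frac{1}{r}(I_+(r)-I_-(r))$; combining with $\frac{n-2}{r}I(r) = \frac{n-2}{r}(I_+(r)+I_-(r))$ produces exactly the coefficients $\frac{n-1}{r}I_+(r)$ and $\frac{n-3}{r}I_-(r)$.

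The main obstacle is the rigorous passage to the distributional limit of the $\eta'_\varepsilon$ integral, which requires $r$ to be a Lebesgue point of the radial energy densities $s \mapsto \int_{\partial B_s^+(x^0)}x_1|\nabla u^+|^2\,dS$ and its $u^-$ counterpart. This is automatic for a.e. $r$ by absolute continuity of $I_+$ and $I_-$ (a property already invoked in Lemmas \ref{L:I} and \ref{L:J'}), so the remainder of the argument is purely algebraic rearrangement.
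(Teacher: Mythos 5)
Your proposal is correct and takes essentially the same approach as the paper: both test the first-variation identity with the radial field $(x-x^0)\eta(|x-x^0|)$, where the paper takes $\eta=\eta_\tau(t)=\max\{0,\min\{1,(r-t)/\tau\}\}$ while you use a smooth approximation of $\chi_{[0,r)}$, an immaterial difference. The algebraic grouping by $\eta$ versus $\eta'$, the passage to the limit for a.e.\ $r$, and the specialization at $x^0_1=0$ via $(x-x^0)_1=x_1$ all match the paper's computation.
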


\begin{proof}
For small positive $\tau>0$ and $\eta_{\tau}(t):=\max\{0,\min\{1,(r-t)/\tau\}\}$, we take after approximation
\[
\phi_{\tau}(x):=\eta_{\tau}(|x-x^0|)(x-x^0)
\]
as a test function in the definition of a variational solution. We obtain

\begin{align*}
0&=\int_{\Omega}\left(x_1|\nabla u^+|^2+\frac{1}{x_1}|\nabla u^-|^2\right)\left(\eta_{\tau}'(|x-x^0|)|x-x^0|+n\eta_{\tau}(|x-x^0|)\right)dx\\
&-2\int_{\Omega}\left(x_1|\nabla u^+|^2+\frac{1}{x_1}|\nabla u^-|^2\right)\eta_{\tau}(|x-x^0|)dx\\
&-2\int_{\Omega}x_1\left\langle \nabla u^+,\frac{x-x^0}{|x-x^0|}\right\rangle^2\eta_{\tau}'(|x-x^0|)|x-x^0|dx\\
&-2\int_{\Omega}\frac{1}{x_1}\left\langle \nabla u^-,\frac{x-x^0}{|x-x^0|}\right\rangle^2\eta_{\tau}'(|x-x^0|)|x-x^0|dx\\
&+\int_{\Om}\left(|\nabla u^+|^2-\frac{1}{x^2}|\nabla u^-|^2\right)\eta_{\tau}(|x-x^0|)(x_1-x_1^0)dx.
\end{align*}

Letting $\tau\rightarrow 0$ we conclude that

\begin{align*}
0&=n\int_{B_r^+(x^0)}\left(x_1|\nabla u^+|^2+\frac{1}{x_1}|\nabla u^-|^2\right)dx
\\
&-r\int_{\partial B_r^+(x^0)}\left(x_1|\nabla u^+|^2+\frac{1}{x_1}|\nabla u^-|^2\right)dS\\
&-2\int_{B_r^+(x^0)}\left(x_1|\nabla u^+|^2+\frac{1}{x_1}|\nabla u^-|^2\right)dx
\\
&+2r\int_{\partial B_r^+(x^0)}\left(x_1\langle \nabla u^+,\nu\rangle^2+\frac{1}{x_1}\langle \nabla u^-,\nu\rangle^2\right)dS\\
&+\int_{B_r^+(x^0)}(x-x^0)_1\left(|\nabla u^+|^2-\frac{1}{x_1^2}|\nabla u^-|^2\right)dx.
\end{align*}

Hence

\begin{align*}
I'(r)&=\int_{\partial B_r^+(x^0)} \left(x_1|\nabla u^+|^2+\frac{1}{x_1}|\nabla u^-|^2\right)dS=\frac{n-2}{r}I(r)\\
&+2\int_{\partial B_r(x^0)}\left(x_1\langle \nabla u^+,\nu\rangle^2+\frac{1}{x_1}\langle \nabla u^-,\nu\rangle^2\right)dS\\
&+\frac{1}{r}\int_{B_r^+(x^0)}(x-x^0)_1\left(|\nabla u^+|^2-\frac{1}{x_1^2}|\nabla u^-|^2\right)dx.
\end{align*}
\end{proof}

\begin{proof}(of Theorem \ref{T:mono})
Recall that
\[
M(r)=r^{-2\beta-n+1}I(r)-\beta r^{-2\beta-n}J_+(r)-(\beta+1)r^{-2\beta-n}J_-(r).
\] By Lemmas \ref{L:J'} and \ref{L:I'}, we have that
\begin{align*}
M'&(r)  = (-2\beta-n+1) r^{-2\beta-n}I(r)+r^{-2\beta-n+1}I'(r)+\beta(2\beta+n)r^{-2\beta-n-1}J_+(r)\\
&-\beta r^{-2\beta-n}(J_+)'(r)+(\beta+1)(2\beta+n)r^{-2\beta-n-1}J_-(r)-(\beta+1) r^{-2\beta-n}(J_-)'(r)\\
&=(-2\beta-n+1) r^{-2\beta-n}I(r)\\
&+2r^{-2\beta-n+1}\int_{\partial B_r^+(x^0)}\left(x_1\langle \nabla u^+,\nu\rangle^2+\frac{1}{x_1}\langle \nabla u^-,\nu\rangle^2\right)dS\\
&+(n-1)r^{-2\beta-n}I_+(r)+(n-3)r^{-2\beta-n}I_-(r)+\beta(2\beta+n)r^{-2\beta-n-1}J_+(r)\\
&-\beta r^{-2\beta-n}\Big(2\int_{\D B_r^+(x^0)}x_1u^+\langle\nabla u^+,\nu\rangle dS+ \frac{n}{r}\int_{\partial B_r^+(x^0)}x_1(u^+)^2dS\Big)\\
&+(\beta+1)(2\beta+n)r^{-2\beta-n-1}J_-(r)-(\beta+1)r^{-2\beta-n}2\int_{\D B_r^+(x^0)}\frac{1}{x_1}u^-\langle\nabla u^-,\nu\rangle dS\\
&-(\beta+1)r^{-2\beta-n}+\frac{n-2}{r}\int_{\D B_r^+(x^0)}\frac{1}{x_1}(u^-)^2dS\\
&=-4\beta r^{-2\beta-n}\int_{\D B_r^+(x^0)}x_1u^+\langle \nabla u^+,\nu\rangle dS \\
&-4(\beta +1) r^{-2\beta-n}\int_{\D B_r^+(x^0)}\frac{1}{x_1}u^-\langle \nabla u^-,\nu\rangle dS \\
&+2r^{-2\beta-n+1} \int_{\partial B_r^+(x^0)}\left(x_1\langle \nabla u^+,\nu\rangle^2+\frac{1}{x_1}\langle \nabla u^-,\nu\rangle^2\right)dS\\
&+2\beta^2 r^{-2\beta-n-1}\int_{\D B_r^+(x^0)}x_1 (u^+)^2dS+2(\beta+1)^2r^{-2\beta-n-1}\int_{\D B_r^+(x^0)}\frac{1}{x_1}(u^-)^2dS\\
&=2r^{-2\beta-n+1}\int_{\D B_r^+(x^0)}\Big[x_1\left(\langle \nabla u^+,\nu\rangle-\frac{\beta}{r}u^+\right)^2\\
&+\frac{1}{x_1}\left(\langle \nabla u^-,\nu\rangle-\frac{\beta+1}{r}u^-\right)^2\Big]dS.
\end{align*}

\end{proof}

\section{Blow-up limits}

Let $0 \in \D \{u>0 \}$. Given $\beta \in (0,+\infty)$, assume the following growth conditions:
\begin{equation}\label{growth+}
|\nabla u^+| \le C|x|^{\beta-1} \ \ \text{in } B_{r_0}^+,
\end{equation}
\begin{equation}\label{growth-}
|\nabla u^-|\le C \sqrt{x_1}|x|^{\beta-1/2} \ \ \text{in } B_{r_0}^+,
\end{equation}
for some $r_0>0$ sufficiently small.

\begin{proposition}\label{P:homog}
Let $u$ be a variational solution of \eqref{intro_sol} and assume that $0\in\Omega\cap \partial\{u>0\}$. Furthermore, given $\beta\in (0,1/2)$, assume that \eqref{growth+} and \eqref{growth-} hold. Then
\begin{enumerate}
\item The limit $M(0+)=\lim\limits_{r\rightarrow 0+}M(r)$ exists and is finite.
\item Let $0<r_m\rightarrow 0+$ be such that the blow up sequence
\[
u_m(x):=\frac{u^+(r_mx)}{r_m^{\beta}}+\frac{u^-(r_mx)}{r_m^{\beta+1}}
\]
converges weakly in $V^{1,2}_{\textnormal{loc}}(\R^n_+)$ to a blow up limit $u_0$. Then $u_0^+$ is a homogeneous function of degree $\beta$ and $u_0^-$ is a homogeneous function of degree $\beta+1$ in $\R^n_+$, i.e., $u_0^+(\lambda x)=\lambda^{\beta}u_0^+(x)$ and $u_0^-(\lambda x)=\lambda^{\beta+1}u_0^-(x).$
\item Let $(u_m)_m$ be a converging sequence as in (2). Then $(u_m)_m$ converges strongly in $V^{1,2}_{\textnormal{loc}}(\R^n_+)$.
\item $M(0+)=0.$
\end{enumerate}
\end{proposition}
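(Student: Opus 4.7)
The proof consists of four steps, all leveraging Theorem~\ref{T:mono} together with the scaling identity $M(s,u_m)=M(sr_m,u)$, which follows from a direct change of variables in the integrals defining $I$ and $J_\pm$.

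For (1), I would first use the growth hypotheses to bound $M$. Radial integration of \eqref{growth+} from the origin (where $u^+$ vanishes by continuity) gives $u^+(x)\le C|x|^\beta$, hence $I_+(r)\le Cr^{2\beta+n-1}$ and $J_+(r)\le Cr^{2\beta+n}$; the analogous integration of \eqref{growth-} (using $\sqrt{x_1}\le\sqrt{|x|}$) yields $|u^-(x)|\le C\sqrt{x_1}\,|x|^{\beta+1/2}$ and matching bounds on $I_-(r)$ and $J_-(r)$. Together these imply $|M(r)|\le C$ on $(0,r_0)$; combined with the monotonicity of $M$, the limit $M(0+)=\inf_r M(r)$ exists and is finite.

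For (2), the scaling identity forces $M(s,u_m)\to M(0+)$ for every fixed $s>0$, so for any $0<s_1<s_2$,
\[
\int_{s_1}^{s_2}M'(t,u_m)\,dt \;=\; M(s_2,u_m)-M(s_1,u_m) \;\longrightarrow\; 0.
\]
Interpreting this iterated integral as an integral over the annulus $(B_{s_2}\setminus B_{s_1})^+$, the integrand in \eqref{M'} is a nonnegative convex quadratic functional of $(v^\pm,\nabla v^\pm)$. Weak convergence $u_m\rightharpoonup u_0$ in $V^{1,2}_{\textnormal{loc}}(\R^n_+)$ combined with weak lower semicontinuity yields
\[
0 \;\le\; \int_{s_1}^{s_2}M'(t,u_0)\,dt \;\le\; \liminf_{m\to\infty}\int_{s_1}^{s_2}M'(t,u_m)\,dt \;=\; 0.
\]
Since $s_1,s_2$ were arbitrary, $\langle\nabla u_0^+,\nu\rangle = \tfrac{\beta}{|x|}u_0^+$ and $\langle\nabla u_0^-,\nu\rangle = \tfrac{\beta+1}{|x|}u_0^-$ a.e., which after integrating the radial ODE in polar coordinates yields homogeneity of $u_0^+$ of degree $\beta$ and of $u_0^-$ of degree $\beta+1$.

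Steps (3) and (4) are tightly linked. The identity above restricted to $[\epsilon,1]$ means $\langle\nabla u_m^+,\nu\rangle-\tfrac{\beta}{|x|}u_m^+\to 0$ strongly in the weighted $L^2$ on the annulus $(B_1\setminus B_\epsilon)^+$, and analogously for $u_m^-$. Strong convergence $u_m^\pm\to u_0^\pm$ in the weighted $L^2_{\textnormal{loc}}$ (weighted Rellich) together with the Euler identity from Step~2 upgrades this to $\langle\nabla u_m^\pm,\nu\rangle\to\langle\nabla u_0^\pm,\nu\rangle$ strongly in the weighted $L^2$ on the annulus. By Fubini, for a good $s\in(\epsilon,1)$ this specialises to strong $L^2$ convergence of both traces and normal derivatives on $\partial B_s^+$. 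Since $u_m$ is itself a variational solution (the equations are invariant under the rescaling \eqref{um}), Lemma~\ref{L:I} gives
\[
I(s,u_m) \;=\; \int_{\partial B_s^+}\Big(x_1\,u_m^+\langle\nabla u_m^+,\nu\rangle + \tfrac{1}{x_1}\,u_m^-\langle\nabla u_m^-,\nu\rangle\Big)\,dS,
\]
and passing to the limit yields $I(s,u_m)\to I(s,u_0)$. Combined with weak convergence of $\nabla u_m$, norm convergence is equivalent to strong convergence in $V^{1,2}_{\textnormal{loc}}$, proving (3). Finally, Lemma~\ref{L:I} applied to $u_0$ together with the Euler relation yields $I_+(1,u_0)=\beta J_+(1,u_0)$ and $I_-(1,u_0)=(\beta+1)J_-(1,u_0)$, so $M(1,u_0)=0$ by inspection of its definition. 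Combining the scaling identity $M(1,u_m)=M(r_m,u)\to M(0+)$ with $M(1,u_m)\to M(1,u_0)=0$ from (3) gives $M(0+)=0$, proving (4). The main obstacle is step (3): the monotonicity formula directly controls only the radial deviation from homogeneity, so upgrading to strong convergence of the full gradient requires the combination of Fubini on a good sphere, boundary compactness, and the integration-by-parts identity above; once this is in place, (4) is an almost immediate consequence of the explicit computation $M(1,u_0)=0$.
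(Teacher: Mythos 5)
Your steps (1), (2) and (4) follow essentially the same lines as the paper: (1) bound $M$ from below using the pointwise bounds obtained by radial integration of the growth hypotheses, invoke the monotonicity of $M$ from Theorem~\ref{T:mono}; (2) re-write $M(\sigma r_m)-M(\tau r_m)$ as an annulus integral in the rescaled variable, use that this tends to $0$, and deduce the Euler identities $\langle\nabla u_0^+,x\rangle=\beta u_0^+$, $\langle\nabla u_0^-,x\rangle=(\beta+1)u_0^-$; (4) compute $M(1,u_0)=0$ via the Euler relation and match it with $M(0+)$ through the scaling identity. Your step (3) is genuinely different. The paper does not go through boundary traces on good spheres at all: it establishes locally uniform convergence of $u_m$ to $u_0$ (from the pointwise gradient bounds plus Arzel\`a--Ascoli), deduces from this that $u_0$ solves the PDEs in each phase, picks $\eta\in C_0^1(\R^n)$, integrates by parts to write $\int x_1|\nabla u_m^+|^2\eta\,dx$ in terms of $u_m^+\nabla u_m^+$ (strong$\times$weak convergent), and passes to the limit; the same for $u_m^-$. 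That argument is shorter, it makes no use of homogeneity (so (3) is independent of (2) in the paper), and it avoids Fubini selection of a good radius. Your version instead extracts strong $L^2$ convergence of the radial deviation $\langle\nabla u_m^\pm,\nu\rangle-\beta_\pm u_m^\pm/|x|$ from the vanishing of the $M'$ annulus integral, upgrades to strong convergence of $\langle\nabla u_m^\pm,\nu\rangle$ using the Euler relation and Rellich, selects a good sphere by Fubini, and invokes Lemma~\ref{L:I}. This works, but it is heavier and entangles (3) with (2).

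There is one genuine gap, affecting both your step (3) and your step (4) as written. You invoke Lemma~\ref{L:I} for $u_0$ (in step (4) explicitly, and implicitly in step (3) when you claim the boundary integral limit equals $I(s,u_0)$). Lemma~\ref{L:I} is an integration-by-parts identity whose validity hinges on $u_0$ satisfying $\mathrm{div}(x_1\nabla u_0)=0$ in $\{u_0>0\}$ and $\mathrm{div}(x_1^{-1}\nabla u_0)=0$ in $\{u_0<0\}$. Weak convergence of $u_m$ in $V^{1,2}_{\textnormal{loc}}$ alone does not give you these equations for the limit, because the positivity sets $\{u_m>0\}$ need to converge in a controlled way for the equation to pass to the limit. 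The paper supplies exactly this missing ingredient: the growth bounds \eqref{growth+}--\eqref{growth-} are used to show that $(u_m)$ is locally bounded in $C^{0,1}$ away from $\{x_1=0\}$, hence converges locally uniformly, so that $\{u_0>0\}$ is eventually contained in every $\{u_m>0\}$ and the PDEs in each phase are inherited by $u_0$. You should insert that locally uniform convergence step (which you already have the hypotheses for) before applying Lemma~\ref{L:I} to $u_0$. A smaller remark: to conclude strong convergence in $V^{1,2}_{\textnormal{loc}}(\R^n_+)$ on all compact subsets you must carry out the Fubini selection of a good radius $s$ in every dyadic annulus, not just on $(B_1\setminus B_\epsilon)^+$, whereas the paper's test-function argument gives all scales at once.
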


\begin{proof}
\begin{enumerate}
\item By Theorem \ref{T:mono}, $M(r)$ is a monotone increasing function. Equations \eqref{growth+} and \eqref{growth-} imply that $|u^+(x)|\le C|x|^{\beta}$ and $|u^-(x)|\le C\sqrt{x_1}|x|^{\beta+1/2}$. Since
\[
M(r)\ge -\beta r^{-2\beta-n}J_+(r)-(\beta+1)r^{-2\beta-n}J_-(r),
\]
we conclude that $M(r)$ is a bounded function and $M(0+)$ exists.
\item Let $0<\sigma,\delta<\infty$ and $x\in \{x_1>\delta\}$. If $\beta \in (0, 1)$, by \eqref{growth+}
\[
 |\nabla u^+(r_mx)|\le C |r_mx|^{\beta-1}\le C\delta^{\beta-1} r_m^{\beta-1},
 \]
 while if $\beta\ge 1$ we clearly have $|\nabla u^+(r_mx)|\le Cr_m^{\beta-1}$. Similarly, by \eqref{growth-}, if $\beta\in (0,1/2)$,
 \[
 |\nabla u^-(r_mx)|\le C\delta^{\beta-1/2} r_m^{\beta},
 \]
and if $\beta\ge 1/2$ we have $|\nabla u^-(r_mx)|\le Cr_m^{\beta}$. Therefore the sequence $(u_m)_m$ is bounded in $C^{0,1}(B_{\sigma}^+\cap \{x_1>\delta\})$, since \eqref{growth+} and \eqref{growth-} hold and
\[
\nabla u_m(x)=\frac{\nabla u^+(r_mx)}{r_m^{\beta-1}} +\frac{\nabla u^-(r_mx)}{r_m^{\beta}}.
\]
Consequently, up to a subsequence, $u_m$ converges locally uniformly to $u_0$. Given $0<\tau<\sigma<\infty$, we can write \eqref{M'} as

\begin{align*}
M(\sigma r_m)-&M(\tau r_m)=\int_{r_m\tau}^{r_m\sigma}M'(r)dr\\
&=\int_{r_m\tau}^{r_m\sigma}2r^{-2\beta-n+1}\int_{\D B_r^+}\Big[x_1\left(\langle \nabla u^+,\nu\rangle -\frac{\beta}{r}u^+\right)^2\\
&+\frac{1}{x_1}\left(\langle \nabla u^-,\nu\rangle -\frac{\beta+1}{r}u^-\right)^2\Big]dS dr\\
&=\int_{\tau}^{\sigma}2(r_ms)^{-2\beta-n+1}r_m\int_{\D B_{r_ms}^+}\Big[x_1\left(\langle \nabla u^+,\nu\rangle -\frac{\beta}{r_ms}u^+\right)^2\\
&+\frac{1}{x_1}\left(\langle \nabla u^-,\nu\rangle -\frac{\beta+1}{r_ms}u^-\right)^2\Big]dS ds\\
&= \int_{\tau}^{\sigma}2s^{-2\beta-n+1}r_m^{-2\beta+1}\int_{\D B_s^+}\Big[ r_my_1\left(\nabla u^+(r_my),\nu\rangle-\frac{\beta}{r_ms}u^+(r_my)\right)^2\\
&+\frac{1}{r_my_1}\left(\langle \nabla u^-(r_my),\nu\rangle -\frac{\beta+1}{r_ms}u^-(r_my)\right)^2 \Big]dS ds\\
&=\int_{\tau}^{\sigma}2s^{-2\beta-n+1}\int_{\D B_s^+}\Big[y_1\left(\langle \nabla u_m^+,\nu\rangle-\frac{\beta}{s}u_m^+\right)^2\\
&+\frac{1}{y_1}\left(\langle \nabla u_m^-,\nu\rangle -\frac{\beta+1}{s}u_m^-\right)^2\Big] dS ds\\
&=2\int_{B_{\sigma}^+\setminus B_{\tau}^+}|x|^{-2\beta-n-1}\Big[x_1\left(\langle\nabla u_m^+,x\rangle-\beta u_m^+\right)^2\\
&+\frac{1}{x_1}\left(\langle \nabla u_m^-,x\rangle -(\beta+1)u_m^-\right)^2\Big]dx.
\end{align*}
Letting $m\rightarrow\infty$ , since $M(0+)$ exists by (1), we conclude that
\begin{align*}
0&\ge\\
& 2\int_{B_{\sigma}^+\setminus B_{\tau}^+}|x|^{-2\beta-n-1}\left[x_1\left(\langle \nabla u_0^+,x\rangle -\beta u_0^+\right)^2+\frac{1}{x_1}\left(\langle \nabla u_0^-,x\rangle-(\beta+1)u_0^-\right)^2\right]dx,
\end{align*}
from where the homogeneity of $u_0$ follows.
\item
In view of the weak convergence, to prove the strong convergence of $u_m$ in $V^{1,2}_{\text{loc}}(\R^n_+)$ it suffices to prove that 
\[
\int_{\R^n_+}x_1|\nabla u_m^+|^2dx\rightarrow \int_{\R^n_+}x_1|\nabla u_0^+|^2dx
\] 
and 
\[
\int_{\R^n_+}\frac{1}{x_1}|\nabla u_m^-|^2dx\rightarrow \int_{\R^n_+}\frac{|\nabla u_0^-|^2}{x_1}dx.
\]
 Let $\delta:=\text{dist}(0,\D\Omega)$. We have that
\[
\text{div}\left(x_1\nabla u_m\right)=0 \ \ \ \text{ in } B_{\frac{\delta}{r_m}}^+\cap\{u_m> 0\}
\]
\[
\text{div}\left(\frac{1}{x_1}\nabla u_m\right)=0 \ \ \ \text{ in } B_{\frac{\delta}{r_m}}^+\cap \{u_m< 0\}
\]
Since $u_m$ converges locally uniformly to $u_0$ and $u_0$ is continuous, we conclude that in $\{ u_0 >0\}$ we have that
\begin{equation}\label{u0}
\text{div}(x_1\nabla u_0)=0
\end{equation}
and in $\{u_0< 0\}$ we have that
\begin{equation}\label{v0}
\text{div}\left(\frac{1}{x_1}\nabla u_0\right)=0.
\end{equation}
Let $\eta\in C_0^1(\R^n)$. An argument similar to that of the proof of \eqref{partsI} combined with \eqref{u0} and \eqref{v0} leads to
\begin{align*}
\int_{\R^n_+}x_1|\nabla u_m^+|^2\eta dx& \longrightarrow \int_{\R^n_+}x_1|\nabla u_0^+|^2\eta dx,
\end{align*}
and 
\[
\int_{\R^n_+}\frac{1}{x_1}|\nabla u_m^-|^2\eta dx \longrightarrow \int_{\R^n_+}\frac{1}{x_1}|\nabla u_0^-|^2\eta dx,
\]
from which the conclusion follows.
\item Let us take a sequence $r_m\rightarrow 0$ such that $(u_m)_m$ defined as in (2) converges weakly in $V^{1,2}_{\text{loc}}(\R^n_+)$ to $u_0$. Using the strong convergence proved in (3) and the homogeneity of $u_0^{\pm}$ proved in (2), we conclude that
\begin{align*}
&M(0+)=\lim\limits_{m\rightarrow\infty} M(r_m)\\
&=\lim\limits_{m\rightarrow\infty} \Big[r_m^{-2\beta-n+1}\int_{B_{r_m}^+}\left(x_1|\nabla u^+|^2+\frac{1}{x_1}|\nabla u^-|^2\right)dx\\
& -\beta r^{-2\beta-n}\int_{\D B_{r_m}^+}x_1(u^+)^2dS-(\beta+1)r_m^{-2\beta-n}\int_{\D B_{r_m}^+}\frac{1}{x_1}(u^-)^2dS\Big]\\
&=\lim\limits_{m\rightarrow \infty}\int_{B_1^+}\left(x_1|\nabla u_m^+|^2+\frac{1}{x_1}|\nabla u_m^-|^2\right)dx-\beta\int_{\D B_1^+}x_1(u_m^+)^2dS\\
&-(\beta+1)\int_{\D B_1^+}\frac{1}{x_1}(u_m^-)^2dS\\
&=\int_{B_1^+}\left(x_1|\nabla u_0^+|^2+\frac{1}{x_1}|\nabla u_0^-|^2\right)dx-\beta\int_{\D B_1^+}x_1 (u_0^+)^2dS\\
&-(\beta+1)\int_{\D B_1^+}\frac{1}{x_1}(u_0^-)^2dS=0.
\end{align*}
\end{enumerate}
\end{proof}

\begin{remark}\label{C:beta}
Let $\beta\in (0,+\infty)$. Assume that for some $r_m\rightarrow 0+$, $(u_m)_m$ (defined as in (2) of Proposition \ref{P:homog}) is bounded in $V^{1,2}_{\text{loc}}(\R^n_+)$. Moreover, assume there exists $c>0$ such that for every $m$ large
\[
c\le \int_{\D B_1^+}\left(x_1(u_m^+)^2+\frac{1}{x_1}(u_m^-)^2\right)dS.
\]
By Proposition \ref{P:homog}, $(u_m)_m$ converges in $V^{1,2}_{\text{loc}}(\R^n_+)$ to $u_0$, a two-phase solution of \eqref{intro_sol} such that $u_0^+$ is homogeneous of degree $\beta$ and $u_0^-$ of degree $\beta+1$.

Assume that $\{u_0>0\}\cap \D B_1^+$ and $\{u_0<0\}\cap \D B_1^+$ are connected sets. Restricted to the unit sphere
the functions $u_0^+$ and $u_0^-$ are eigenfunctions, therefore $\lambda^+(\D B_1^+\cap \{u_0>0\})=\beta(\beta+1)$ and $\lambda^-(\D B_1^+\cap \{u_0<0\})=\beta(\beta+1)$. Consequently, $\lambda^+(\D B_1^+\cap \{u_0>0\})=\lambda^-(\D B_1^+\cap \{u_0<0\})$, hence $\beta=\alpha^*$, where $\alpha^*$ was defined in \eqref{alpha*}.
\end{remark}

The following Lemma, which contains a growth estimate for the solution, will not be used in the sequel.

\begin{lemma}\label{L:JandM} Under the same assumptions of Proposition \ref{P:homog}, we have the follo\-wing relationship between $J(r)$ and $M(r)$:
\[
(r^{-2\beta-n}J(r))'=\frac{2}{r}M(r).
\]
In particular, using theorem \ref{T:mono} we conclude that
\begin{equation}\label{rJ}
(r^{-2\beta-n}J(r))'\ge \frac{2}{r}M(0+).
\end{equation}
Therefore, if $0<r_m\rightarrow 0+$ is such that the blow up sequence
\[
u_m(x):=\frac{u^+(r_mx)}{r_m^{\beta}}+\frac{u^-(r_mx)}{r_m^{\beta+1}}
\]
is bounded in $V^{1,2}_{\text{loc}}(\R^n_+)$, by Proposition \ref{P:homog}, $M(0+)=0$, hence $r^{-2\beta-n}J(r)$ is monotone non-decreasing.
\end{lemma}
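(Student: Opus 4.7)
The identity $(r^{-2\beta-n}J(r))' = \frac{2}{r}M(r)$ is an algebraic consequence of the differentiation lemmas already established in Section \ref{S:weiss}, so my plan is a direct computation rather than any new idea. First I would apply the product rule to $F(r) := r^{-2\beta-n}J(r)$, obtaining
\[
F'(r) = -(2\beta+n)r^{-2\beta-n-1}J(r) + r^{-2\beta-n}J'(r),
\]
and then substitute the explicit form of $J'(r)$ coming from Lemma \ref{L:J'}.

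To get that explicit form, I specialize Lemma \ref{L:J'} to $x^0 = 0$. Since $x^0_1 = 0$, one has $(x-x^0)_1 = x_1$, and so the correction integrals collapse neatly: $\int_{\D B_r^+}(x-x^0)_1(u^+)^2\,dS = J_+(r)$ and $\int_{\D B_r^+}\frac{(x-x^0)_1}{x_1^2}(u^-)^2\,dS = J_-(r)$. This yields
\[
(J_+)'(r) = \frac{n}{r}J_+(r) + 2\int_{\D B_r^+} x_1 u^+ \langle\nabla u^+,\nu\rangle\,dS, \qquad (J_-)'(r) = \frac{n-2}{r}J_-(r) + 2\int_{\D B_r^+}\frac{1}{x_1} u^- \langle\nabla u^-,\nu\rangle\,dS.
\]
Adding these and invoking Lemma \ref{L:I} to recognize the sum of the two boundary integrals as $I(r)$, I obtain
\[
J'(r) = \frac{n}{r}J_+(r) + \frac{n-2}{r}J_-(r) + 2I(r).
\]
Plugging this into $F'(r)$ and grouping terms, the coefficient of $r^{-2\beta-n-1}J_+(r)$ becomes $-2\beta$, the coefficient of $r^{-2\beta-n-1}J_-(r)$ becomes $-2(\beta+1)$, and the coefficient of $r^{-2\beta-n}I(r)$ becomes $2$, which matches $\frac{2}{r}M(r)$ exactly in view of the definition \eqref{M}.

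Once the identity is established, the inequality \eqref{rJ} is immediate from Theorem \ref{T:mono}, which asserts that $M$ is nondecreasing on $(0,\delta)$ and hence $M(r) \ge M(0+)$. The final assertion uses part (4) of Proposition \ref{P:homog}: under the hypothesis that the blow-up sequence $(u_m)_m$ is bounded in $V^{1,2}_{\text{loc}}(\R^n_+)$, the blow-up limit exists with the stated homogeneity and $M(0+) = 0$, so $(r^{-2\beta-n}J(r))' \ge 0$ almost everywhere, and the claimed monotonicity follows. Since the argument is pure bookkeeping relying on facts already proved, there is no real conceptual obstacle; only care with the signs and the factors of $n$ produced by the $(x-x^0)_1$ corrections in Lemma \ref{L:J'} is needed.
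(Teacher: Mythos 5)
Your proof is correct and follows essentially the same route as the paper: a product-rule expansion of $r^{-2\beta-n}J(r)$, substitution of the $x^0_1=0$ case of Lemma \ref{L:J'} combined with Lemma \ref{L:I}, and a coefficient check against the definition of $M(r)$. The paper's proof is just a more compact presentation of the same computation.
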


\begin{proof}
\begin{align*}
(r^{-2\beta-n}J(r))'&=-(2\beta+n)r^{-2\beta-n-1}J(r)\\
&+r^{-2\beta-n}\left(\frac{n}{r}J_+(r)+2I(r)+\frac{n-2}{r}J_-(r)\right)\\
&=2r^{-2\beta-n}I(r)-2\beta r^{-2\beta-n-1}J_+(r)-2(\beta+1)r^{-2\beta-n-1}J_-(r)\\
&=\frac{2}{r}M(r).
\end{align*}
\end{proof}

\section{Convergence to cusps}

In this section we assume that $n=2$ and that the free boundary is, in a neighborhood of the origin, a continuous injective curve, and we study when it must be asymptotically cusp-shaped.

We start with a Lemma which will be used later on.

\begin{lemma}\label{L:cacc} Let $u$ be a variational solution of \eqref{intro_sol} in $B_2^+$. There exists $C>0$ such that
\[
\int_{B_1^+}x_1|\nabla u^+|^2dx \le C \int_{B_2^+\setminus B_1^+} x_1(u^+)^2dx,
\]
\[
 \int_{B_1^+}\frac{1}{x_1}|\nabla u^-|^2dx \le C\int_{B_2^+\setminus B_1^+}\frac{1}{x_1}(u^-)^2dx.
\]
\end{lemma}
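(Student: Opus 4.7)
The plan is to carry out the standard Caccioppoli argument, testing each of the two equations in \eqref{u} against the solution multiplied by the square of a radial cutoff. The only non-routine points are that the test function has to be truncated both across the free boundary $\partial\{u>0\}$ and near the degenerate/singular axis $\{x_1=0\}$; both truncations are handled exactly as in the integration by parts performed in the proof of Lemma~\ref{L:I} (and in the proof of Theorem~\ref{T:ACF}), so no new analytic input is required.

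In more detail, fix $\eta\in C_0^1(B_2)$ with $0\le\eta\le 1$, $\eta\equiv 1$ on $B_1$, and $|\nabla\eta|\le C$. For the first inequality, since $\div(x_1\nabla u)=0$ in $\{u>0\}$, I would test this equation against $\eta^2 u^+$, approximated by $\eta^2 \max(u-\veps,0)^{1+\veps}$, integrate by parts on $B_2^+\cap\{x_1>\veps\}$, and let $\veps\to 0+$. The boundary contribution on $\partial B_2^+$ vanishes because $\eta$ is compactly supported in $B_2$, while the contribution on $\{x_1=\veps\}$ vanishes in the limit thanks to the boundary behavior assumed on variational solutions. This yields
\[
\int_{B_2^+} x_1\eta^2|\nabla u^+|^2\,dx \;=\; -2\int_{B_2^+} x_1\eta\, u^+\,\langle \nabla u^+,\nabla\eta\rangle\,dx.
\]
Then Cauchy--Schwarz and Young's inequality (with parameter $1/2$ on the gradient term) give
\[
\int_{B_2^+} x_1\eta^2|\nabla u^+|^2\,dx \;\le\; \tfrac{1}{2}\int_{B_2^+} x_1\eta^2|\nabla u^+|^2\,dx + C\int_{B_2^+} x_1(u^+)^2|\nabla\eta|^2\,dx,
\]
after which absorption and the properties $\eta\equiv 1$ on $B_1$, $\supp(\nabla\eta)\subset B_2\setminus B_1$, yield the first claim.

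The second inequality is proved in exactly the same way, now starting from $\div(\tfrac{1}{x_1}\nabla u)=0$ in $\{u<0\}$, testing against $\eta^2 u^-$ via the approximation $\eta^2 \min(u+\veps,0)^{1+\veps}$ on $\{x_1>\veps\}$ and passing to the limit. The weight $1/x_1$ replaces $x_1$ throughout, and the identity
\[
\int_{B_2^+} \frac{\eta^2}{x_1}|\nabla u^-|^2\,dx \;=\; -2\int_{B_2^+} \frac{\eta}{x_1}\, u^-\,\langle \nabla u^-,\nabla\eta\rangle\,dx
\]
again combines with Cauchy--Schwarz, Young's inequality, and absorption to yield the desired estimate.

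I expect the only delicate point to be the justification of the integration by parts at $\{x_1=0\}$ (where the weights $x_1$ and $1/x_1$ degenerate or blow up) and across the free boundary. Both have already been dealt with in the previous sections by the same $\max(u-\veps,0)^{1+\veps}$/$\min(u+\veps,0)^{1+\veps}$ trick, so invoking that machinery should make the argument complete; beyond this, the Caccioppoli estimate is entirely routine.
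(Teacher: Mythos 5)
Your proposal is correct and follows essentially the same route as the paper: the same cutoff $\eta$, the same Caccioppoli identity obtained by testing the equations in each phase against $\eta^2 u^{\pm}$ via the $\max(u-\veps,0)^{1+\veps}$ and $\min(u+\veps,0)^{1+\veps}$ approximations (the paper simply refers back to the proof of Lemma~\ref{L:I} for this step), followed by Cauchy--Schwarz and absorption. The only cosmetic difference is in how the absorption is organized (you split the gradient term via Young's inequality with parameter $1/2$ on all of $B_2^+$, while the paper instead observes that the residual gradient term lives on $B_2^+\setminus B_1^+$ and subtracts it), but both are standard and equivalent.
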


\begin{proof} Let $\eta\in C^{\infty}_0(B_2)$ be such that $\eta\equiv 1$ in $B_1$, $0\le \eta\le 1$ and $|\nabla \eta|\le C$, for some dimensional constant $C>0$. Proceeding as in the proof of Lemma \ref{L:I} and using \eqref{intro_sol}, an integration by parts argument leads to
\begin{align*}
\int_{B_2^+}x_1|\nabla u^+|^2\eta^2dx &=-2\int_{B_2^+}x_1u^+\eta\langle \nabla u^+,\nabla \eta\rangle dx \\
&\le 2C\int_{B_2^+\setminus B_1^+}x_1 u^+\eta|\nabla u^+|dx\\
&\le C^2\int_{B_2^+\setminus B_1^+}x_1(u^+)^2dx+\int_{B_2^+\setminus B_1^+}x_1\eta^2|\nabla u^+|^2dx,
\end{align*}
hence
\[
\int_{B_1^+}x_1|\nabla u^+|^2dx \le C^2\int_{B_2^+\setminus B_1^+}x_1(u^+)^2.
\]
A similar argument holds for $u^-$.
\end{proof}

\begin{definition}\label{weaksol} We define $u \in V^{1,2}(\Omega)$ to be a \emph{weak solution} of \eqref{intro_sol} if the following are satisfied: $u$ is a variational solution of \eqref{intro_sol} and the topological free boundary $\partial\{u>0\}\cap\Omega\cap\{x_1>0\}$ is locally a $C^{2,\alpha}$ curve.
\end{definition}

Let $\gamma\in (0,1/2)$.

\begin{theorem}\label{T:main} Let $\gamma\in (0,1/2)$, and let $u$ be a weak solution of \eqref{intro_sol} in the sense of Definition \ref{weaksol}. Suppose that $0\in \D \{u>0\}$ and that $\D \{u>0\}\cap B_1^+$ is, in a neighborhood of $0$, a continuous injective curve $\sigma: [0,1) \rightarrow \R^2$, where $\sigma=(\sigma_1,\sigma_2)$ and $\sigma(0)=0$. Suppose, additionally, that there exist $r_0, C>0$ such that for each $r\in (0,r_0)$, 
\begin{equation}\label{bound}
Cr^{2\gamma+1}\le \int_{B_r^+}\left(x_1|\nabla u^+|^2+\frac{1}{x_1}|\nabla u^-|^2\right)dx.
\end{equation}
Then,
\[
\lim\limits_{t\rightarrow 0+}\left|\frac{\sigma_1(t)}{\sigma_2(t)}\right|=0,
\]
that is, the free boundary is asymptotically cusp-shaped.
\end{theorem}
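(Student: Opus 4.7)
I argue by contradiction. Suppose $\limsup_{t\to 0+}|\sigma_1(t)/\sigma_2(t)|>0$, extract $t_m\downarrow 0$ with $|\sigma_1(t_m)/\sigma_2(t_m)|\ge c>0$, and set $r_m:=|\sigma(t_m)|$, $p_m:=\sigma(t_m)/r_m\in\partial B_1^+$. After passing to a subsequence, $p_m\to p_0\in\partial B_1^+$ with $(p_0)_1>0$, so the limit direction is bounded away from the axis $\{x_1=0\}$. I will blow up at scale $r_m$ with the matched exponents $\gamma,\gamma+1$:
\[
u_m(x):=\frac{u^+(r_mx)}{r_m^{\gamma}}+\frac{u^-(r_mx)}{r_m^{\gamma+1}},
\]
and derive a contradiction.

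By \eqref{bound}, $\int_{B_1^+}\bigl(x_1|\nabla u_m^+|^2+\tfrac{1}{x_1}|\nabla u_m^-|^2\bigr)\,dx=r_m^{-2\gamma-1}I(r_m)\ge C>0$, which prevents the blow-up from being trivial in the limit. Since $\gamma<1/2$ and $\beta^*\ge 2$ by Lemma \ref{L:beta*}, one has $2\gamma+1<\beta^*$, so Corollary \ref{C:blowups} forces any weak $V^{1,2}_{\textnormal{loc}}(\R^2_+)$-limit of $u_m$ to be one-phase. To obtain $V^{1,2}_{\textnormal{loc}}$-boundedness and hence extract a nontrivial weak limit, I invoke the polynomial expansion of Caffarelli--Friedman \cite{cf2} to rule out infinite-order vanishing of $u$ at $0$ and derive pointwise growth bounds of the form \eqref{growth+}--\eqref{growth-}. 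Proposition \ref{P:homog} then gives strong convergence along a subsequence to a limit $u_0$ with $u_0^+$ $\gamma$-homogeneous and $u_0^-$ $(\gamma+1)$-homogeneous; the one-phase conclusion forces, after a choice of sign, $u_0\ge 0$.

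The rescaled free-boundary curves $\sigma_m:=\sigma(r_m\,\cdot)/r_m\subset\partial\{u_m>0\}$ pass through $p_m\to p_0$. The $C^{2,\alpha}$-regularity of the free boundary built into Definition \ref{weaksol}, combined with the strong $V^{1,2}_{\textnormal{loc}}$-convergence of $u_m$, yields Hausdorff convergence of $\partial\{u_m>0\}$ to $\partial\{u_0>0\}$ on compacta of $\{x_1>0\}$; in particular $p_0\in\partial\{u_0>0\}$. Because $u_0$ is a variational solution with $u_0^-\equiv 0$, passage to the limit in the Bernoulli condition gives $x_1|\nabla u_0|^2=0$ on $\partial\{u_0>0\}$, so $|\nabla u_0|(p_0)=0$. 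Combined with $u_0(p_0)=0$ this is zero Cauchy data at a smooth free-boundary point in a region where $\div(x_1\nabla\cdot)$ is uniformly elliptic. Hopf's boundary point lemma then forces $u_0\equiv 0$ on the component of $\{u_0>0\}$ touching $p_0$, contradicting nontriviality and proving $|\sigma_1(t)/\sigma_2(t)|\to 0$.

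The main obstacle is the extraction of a nontrivial, homogeneous blow-up limit: the one-sided non-degeneracy \eqref{bound} does not control $u_m$ from above, so the polynomial expansion of \cite{cf2} is essential in excluding infinite-order vanishing and producing the growth bounds needed to activate the Weiss-type monotonicity of Theorem \ref{T:mono} and Proposition \ref{P:homog}. The secondary technical point is the passage of the Bernoulli condition and of the free-boundary geometry to the limit, which relies on the injective-curve hypothesis and on the $C^{2,\alpha}$-regularity of Definition \ref{weaksol}.
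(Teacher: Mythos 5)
Your overall strategy (blow up at the origin at scale $r_m$ with exponents $\gamma,\gamma+1$, use the Alt--Caffarelli--Friedman inequality to show the limit is one-phase, and then contradict the choice of $(p_m)$ having first coordinate bounded away from zero) is the right skeleton, and it matches the paper up through Corollary \ref{C:blowups}. However, there are two genuine gaps where you substitute a sketch for the actual mechanism.

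\textbf{First gap: getting a nontrivial blow-up limit.} You correctly observe that \eqref{bound} controls $u_m$ from below but not from above, and you propose to fill the hole by invoking the Caffarelli--Friedman polynomial expansion to ``derive pointwise growth bounds of the form \eqref{growth+}--\eqref{growth-}.'' This is not substantiated, and it is not what makes the argument work. The growth conditions \eqref{growth+}--\eqref{growth-} (with the precise exponents $\beta-1$ and $\beta-1/2$) are a hypothesis of Proposition \ref{P:homog}, and they are not granted by the statement of Theorem \ref{T:main}; the paper even remarks in the introduction that it deliberately does \emph{not} derive homogeneity of blow-ups from Theorem \ref{T:mono}. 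What the paper actually does is introduce a \emph{second rescaling}: a Caccioppoli estimate (Lemma \ref{L:cacc}) combined with \eqref{bound} produces radii $t_m\in(1,2)$ with $\int_{\partial B_{t_m}^+}x_1(u_m^+)^2\,dS$ bounded below, and then one defines
\[
v_m:=\frac{u_m^++u_m^-}{\bigl(\int_{\partial B_{t_m}^+}(x_1(u_m^+)^2+\tfrac{1}{x_1}(u_m^-)^2)\,dS\bigr)^{1/2}},
\]
so that $v_m$ has unit boundary norm by construction. Theorem \ref{T:mono} (applied with $\beta=\gamma$) is then used only to give a \emph{uniform upper bound} on $\int_{B_{t_m}^+}(x_1|\nabla v_m^+|^2+\tfrac{1}{x_1}|\nabla v_m^-|^2)\,dx$, not to prove homogeneity. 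This normalization is the key device that circumvents the need for pointwise growth bounds, and your proposal does not contain it.

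\textbf{Second gap: the contradiction at the limit.} You pass the Bernoulli condition $x_1|\nabla u_0^+|^2=\tfrac{1}{x_1}|\nabla u_0^-|^2$ to the limit and then apply Hopf's lemma at the single point $p_0$. Weak (or even strong) $V^{1,2}_{\text{loc}}$ convergence does \emph{not} give uniform $C^1$ control near the free boundary, so the Bernoulli condition does not obviously pass to the limit, and $C^{2,\alpha}$ regularity of $\partial\{u>0\}$ does not automatically transfer to $\partial\{u_0>0\}$. The paper avoids both issues by transforming to $z_m$ (multiplying the positive part by $x_1$), which satisfies the semilinear equation $\Delta z_m-\tfrac{1}{x_1}\partial_1 z_m+x_1^{-2}z_m^+=0$ that is uniformly elliptic away from $\{x_1=0\}$; this gives $W^{2,p}_{\text{loc}}$ (hence $C^{1,\alpha}_{\text{loc}}$) convergence, which makes $|\nabla z_m^+|=|\nabla z_m^-|$ on the free boundary pass to the limit. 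The final contradiction is then obtained not by a Hopf argument at a single point $p_0$, but by a quantitative $\mathcal{H}^{1,\infty}$-measure lower bound on $\{z_0=0,|\nabla z_0|=0\}\cap\{x_1\ge\varepsilon/2\}$ (using the injective-curve hypothesis and the semicontinuity Lemma 11.5 of \cite{giusti}) and then Theorem 3.1 of \cite{cf1}, which forces $z_0\equiv 0$ and contradicts the unit normalization of $v_0$. The measure-theoretic argument is more robust than a single-point Hopf argument precisely because the free-boundary geometry may degenerate near $\{x_1=0\}$ and because the convergence is not strong enough to ensure that $p_0$ remains a smooth boundary point of the limit.
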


\begin{proof}

If the result does not hold, then there exist $\varepsilon >0$ and sequences $(r_m)_m, (\xi_m)_m$ such that $r_m\rightarrow 0$, $\xi_m\in S^1$ and $r_m\xi_m\in \partial \{u>0\}$, but $(\xi_m)_1>\varepsilon|\xi_m|_2$ for all $m$.

We start by analyzing following sequence of rescalings of $u$ (blow-up sequence):
\begin{equation}\label{critical}
u_m(x):=\frac{u^+(r_mx)}{r_m^{\gamma}}+\frac{u^-(r_mx)}{r_m^{\gamma+1}}.
\end{equation}
Note that, by \eqref{bound}, up to a subsequence, either $\frac{C}{2}\le \int_{B_1^+}x_1|\nabla u_m^+|^2dx$, or $\frac{C}{2}\le \int_{B_1^+}\frac{1}{x_1}|\nabla u_m^-|^2dx$. For definiteness  we will assume that the first of these estimates holds, since the analysis is similar in the other case.

By Theorem \ref{T:ACF}, since $\beta^*>2\gamma+1$, we have that
\begin{align*}
\int_{B_1^+}&x_1|\nabla u_m^+|^2dx\int_{B_1^+}\frac{1}{x_1}|\nabla u_m^-|^2dx
=r_m^{-2(2\gamma+1)}\int_{B_{r_m}^+}x_1|\nabla u^+|^2dx\int_{B_{r_m}^+}\frac{1}{x_1}|\nabla u^-|^2dx\\
&=r_m^{2(\beta^*-2\gamma-1)}\Phi(r_m,u^+,u^-)\le r_m^{2(\beta^*-2\gamma-1)}\Phi(1,u^+,u^-)\rightarrow 0,
\end{align*}
and therefore
\begin{equation}\label{ACFconc}
\int_{B_1^+}x_1|\nabla u_m^+|^2dx\int_{B_1^+}\frac{1}{x_1}|\nabla u_m^-|^2dx\rightarrow 0.
\end{equation}
By \eqref{bound}, we may assume that $\int_{B_1^+}\frac{1}{x_1}|\nabla u_m^-|^2dx\rightarrow 0,$ which will be used below.

By Lemma \ref{L:cacc} there exists universal $C>0$ such that
\begin{equation}\label{caccioppoli}
\int_{B_1^+}x_1|\nabla u_m^+|^2dx\le C\int_{B_2^+\setminus B_1^+}x_1(u_m^+)^2dx.
\end{equation}
Combining \eqref{caccioppoli} and \eqref{bound} we conclude that there exist $c>0$ and $t_m\in (1,2)$ such that, for every $m$, we have
\begin{equation}\label{cacc2}
c\le \int_{\D B_{t_m}^+}x_1(u_m^+)^2dS.
\end{equation}

We now define the following sequence of rescalings:
\begin{align*}
v_m(x)&=\frac{u^+_m(x)+u_m^-(x)}{\sqrt{\int_{\D B_{t_m}^+}\left(x_1(u_m^+)^2+\frac{1}{x_1}(u_m^-)^2\right)dS}}.
\end{align*}
Notice that trivially $\int_{\D B_{t_m}^+}\left(x_1(v_m^+)^2+\frac{1}{x_1}(v_m^-)^2\right)dS=1$.

By Theorem \ref{T:mono} applied with $\beta=\gamma$,
\begin{align*}
M&(r_mt_m)=(r_mt_m)^{-2\gamma-1}\int_{B_{t_mr_m}^+}\left(x_1|\nabla u^+|^2+\frac{1}{x_1}|\nabla u^-|^2\right)dx\\
&-\gamma (r_mt_m)^{-2\gamma-2}\int_{\D B_{r_mt_m}^+}x_1(u^+)^2 dS\\
&-(\gamma+1)(r_mt_m)^{-2\gamma-2}\int_{\D B_{r_mt_m}^+}\frac{1}{x_1}(u^-)^2dS\\
&=t_m^{-2\gamma-1}\int_{\D B_{t_m}^+}\left(x_1(u_m^+)^2+\frac{1}{x_1}(u_m^-)^2\right)dS\int_{B_{t_m}^+}\left(x_1|\nabla v_m^+|^2+\frac{1}{x_1}|\nabla v_m^-|^2\right)dx\\
&-t_m^{-2\gamma-2}\gamma\int_{\D B_{t_m}^+}x_1(u_m^+)^2dS-t_m^{-2\gamma-2}(\gamma+1)\int_{\D B_{t_m}^+}\frac{1}{x_1}(u_m^-)^2dS \le M(1).
\end{align*}
Therefore
\begin{align*}
\int_{\D B_{t_m}^+}&\left( x_1(u_m^+)^2+\frac{1}{x_1}(u_m^-)^2\right)dS\int_{B_{t_m}^+}\left(x_1|\nabla v_m^+|^2+\frac{1}{x_1}|\nabla v_m^-|^2\right)dx  \\
&\le
t_m^{-1}\gamma \int_{\D B_{t_m}^+}x_1(u^+)^2dS+t_m^{-1}(\gamma+1)\int_{\D B_{t_m}^+}\frac{1}{x_1}(u^-)^2dS+t_m^{2\gamma+1}M(1).
\end{align*}
Consequently,
\begin{equation}\label{integr}
\begin{aligned}
\int_{B_{t_m}^+}\left(x_1|\nabla v_m^+|^2+\frac{1}{x_1}|\nabla v_m^-|^2\right)dx & \le \frac{2^{2\gamma+1}M(1)}{\int_{\D B_{t_m}^+}\left(x_1(u_m^+)^2+\frac{1}{x_1}(u_m^-)^2\right)dS}\\
&+ (\gamma+1).
\end{aligned}
\end{equation}

We claim that
\begin{equation}\label{claim}
\int_{B_{t_m}^+}\left(x_1|\nabla v_m^+|^2+\frac{1}{x_1}|\nabla v_m^-|^2\right)dx\le C \ \ \text{ and } \int_{B_1^+}\frac{1}{x_1}|\nabla v_m^-|^2dx\rightarrow 0.
\end{equation}

The first part of the claim follows from \eqref{cacc2} and \eqref{integr}.
Moreover, by \eqref{ACFconc} and \eqref{bound}, we have
\begin{equation}\label{quotient}
\frac{\int_{B_1^+}\frac{1}{x_1}|\nabla v_m^-|^2dx}{\int_{B_1^+}x_1|\nabla v_m^+|^2dx}\rightarrow 0.
\end{equation}

The second part of the claim follows by combining the first part of the claim, \eqref{bound} and\eqref{quotient}.

combining \eqref{bound} with \eqref{quotient}.

Let $t$ be the limit of a subsequence $(t_{m_k})_k$; since $(v_m)_m$ is bounded in $V^{1,2}(B_{t_m}^+)$, there exist $v_0$ such that $v_m \rightarrow v_0$ weakly in $V^{1,2}(B_t^+)$. Furthermore,  $v_0^-\equiv 0$ in $B_1^+$ and the compact embedding of $V^{1,2}(B_t^+)$ in $L^2_w(\D B_t^+)$ gives
\begin{equation}\label{integralSt}
\int_{\D B_{t}^+}\left(x_1(v_0^+)^2+\frac{1}{x_1}(v_0^-)^2\right)dS=1.
\end{equation}

We define the auxiliary functions in $B_t^+$
\[
z_m(x)=\begin{cases}
 x_1 v_m^+(x), \text{ if } v_m(x)>0 \\
v_m^-(x), \text{ if } v_m(x) <0.
\end{cases}
\]
Notice that on $\D\{z_m>0\}\cap\{x_1\neq 0\}$, $|\nabla z_m^+|=|\nabla z_m^-|$.
Since $\text{div}( x_1\nabla v_m)=0$ in $\{v_m>0\}$ and $v_m^+=\frac{1}{x_1}z_m^+$, in $\{z_m>0\}\cap B_t^+$ we have
\begin{align*}
0&=\text{div}(x_1\nabla v_m)=\text{div}(\nabla z_m)-\text{div}\left(\frac{e_1}{x_1}z_m\right)\\
&=\Delta z_m-\frac{\partial_1 z_m}{x_1}+\frac{z_m}{x_1^2}.
\end{align*}

Since in $\{v_m<0\}$ we have that
\[
0=\text{div}\left(\frac{1}{x_1}\nabla v_m\right)=\frac{1}{x_1}\Delta v_m-\frac{1}{x_1^{2}}\partial_1v_m,
\]
then in $\{z_m<0\}\cap B_t^+$ we have
\[
\Delta z_m=\frac{1}{x_1}\partial_1 v_m=\frac{1}{x_1}\partial_1z_m.
\]
We conclude that in $B_t^+$
\begin{equation}\label{w}
0=\Delta z_m-\frac{1}{ x_1}\partial_1z_m +x_1^{-2}z_m^+.
\end{equation}

Since $\xi_m\in  \D\{u_m>0\}$, then $\xi_m\in \D\{z_m>0\}$. Moreover, by hypothesis the free boundary of $u$ is assumed to be, in a neighborhood of the origin, a continuous injective curve, hence
\[
\mathcal{H}^{1,\infty}\left(\left\{x_1\ge \frac{\varepsilon}{2}\right\}\cap \{z_m=0\} \right)\ge \frac{\varepsilon}{4},
\]
where given $A\subset \R^2$,
\[
\mathcal{H}^{1,\infty}(A):=\frac{\omega_2}{2}\inf\Big\{\sum_{j=1}^{\infty}\text{diam}(S_j) \ : \ A\subset \cup_{j=1}^{\infty}S_j, \ \text{diam}(S_j) <\infty\Big\}.
\]

Moreover, up to a subsequence, there exists $z_0\in C^{1,\alpha}_{\text{loc}}(B_t)$ such that $z_m$ converges to $z_0$ in $W^{2,p}_{\text{loc}}(B_t^+)$ for each $1\le p<\infty$. Locally in $B_t^+$, $z_0$ solves
\[
0=\Delta z_0-\frac{1}{x_1}\partial_1z_0+x_1^{-2}z_0^+,
\]
and $|\nabla z_0^+|=|\nabla z_0^-|$ on $\D\{z_0>0\}\cap\{x_1> 0\}\cap B_t^+$.
As $v_0^-\equiv 0$ in $B_1^+$, then $z_0^-\equiv 0$ in $B_1^+$. Finally, since $z_m\rightarrow z_0$, then by Lemma 11.5 of \cite{giusti},
\begin{equation}\label{Haus}
\begin{aligned}
\mathcal{H}^{1,\infty}&\left(\left\{x_1\ge \frac{\varepsilon}{2}\right\}\cap\{z_0=0\} \cap B_1^+\right)\\
&\ge \limsup \mathcal{H}^{1,\infty}\left(\left\{x_1\ge \frac{\varepsilon}{2}\right\}\cap\{z_m=0\} \cap B_1^+\right)\\
&\ge \frac{\varepsilon}{4}.
\end{aligned}
\end{equation}
Since $z_0^-\equiv 0$ in $B_1^+$, we obtain that $|\nabla z_0^+|=0$ on $\D\{z_0>0\}\cap\{x_1> 0\}\cap B_1^+$, which implies $\D\{z_0>0\}\cap\{x_1> 0\}\cap B_1^+\subseteq \{z\in \{z_0=0\} \ | \ |\nabla z_0|=0\}$. Consequently, we conclude from \eqref{Haus} that
\begin{equation}\label{CF}
\mathcal{H}^{1,\infty}\left( \left\{x_1\ge \frac{\varepsilon}{2}\right\}\cap \{z_0=0, |\nabla z_0|=0\}\cap B_1^+\right)\ge \frac{\varepsilon}{4}.
\end{equation}
By Lemma 11.2 of \cite{giusti},
$\mathcal{H}^{1}\left( \left\{x_1\ge \frac{\varepsilon}{2}\right\}\cap \{z_0=0, |\nabla z_0|=0\}\cap B_1^+\right)>0$. Therefore, by Theorem 3.1 of \cite{cf1}, $z_0=0$ in $B_t^+$ and so $V^{1,2}(B_t^+)\ni v_0$ is zero in $B_t^+$, which contradicts \eqref{integralSt}.

\end{proof}

\section{Regularization away from $\{x_1=0\}$}
Assume $n=2$.
\begin{theorem}\label{T:exist} There exist $u \in C^{1,\alpha}(\overline{\{u>0\}}\cap \Omega\cap \{x_1>0\})\cap C^{1,\alpha}(\overline{\{u<0\}}\cap\Omega\cap \{x_1>0\})$ and a set $S$ consisting of mostly locally isolated points such that $u$ is a classical solution of \eqref{intro_sol} in $(\Omega\cap\{x_1>0\})\setminus S$ and the level set $\{u=0\}\cap \{x_1>0\}\cap\Omega$ is, outside $S$, locally a $C^{1,\alpha}$ graph.
\end{theorem}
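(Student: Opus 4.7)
The strategy is to produce $u$ as a minimizer of the energy
\[
E(v)=\int_\Om\left(x_1|\nabla v^+|^2+\frac{1}{x_1}|\nabla v^-|^2\right)dx
\]
from Section \ref{notion} with prescribed Dirichlet data on $\D\Om$, and then to transfer the Alt--Caffarelli--Friedman theory of two-phase free boundaries to the present weighted setting, exploiting that on any compact subset $K$ of $\Om\cap\{x_1>0\}$ the weights $x_1$ and $1/x_1$ are smooth and bounded above and away from zero. Existence of a minimizer $u\in V^{1,2}(\Om)$ follows from convexity of $E$ (quadratic in $u^+$ and in $u^-$ separately), weak lower semicontinuity, coercivity via a weighted Poincar\'e inequality, and the direct method of the calculus of variations. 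Compactly supported variations of $u$ in $\{u>0\}$ and $\{u<0\}$ yield the bulk equations $\div(x_1\nabla u)=0$ and $\div(x_1^{-1}\nabla u)=0$, while inner variations $u\mapsto u(x+\veps\phi)$ recover the first-variation identity from the definition of variational solution, so that $u$ is in fact a variational solution of \eqref{intro_sol}.

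On such a compact $K$, Theorem \ref{T:ACF} applied at any free boundary point $x^0\in K$ yields the usual ACF product bound $|\nabla u^+(x^0)|\,|\nabla u^-(x^0)|\le C$; combined with interior elliptic estimates for the bulk equations (which are uniformly elliptic on $K$), this gives local Lipschitz regularity of $u$. Non-degeneracy $\sup_{B_r(x^0)}u^\pm\ge c\,r$ follows from minimality via barrier comparison. Rescaling $u_r(y):=u(x^0+ry)/r$ and sending $r\to 0$, the weights freeze to the positive constant $x_1^0$; any blow-up limit $u_0$ is a global minimizer of the resulting constant-coefficient two-phase energy and satisfies $x_1^0|\nabla u_0^+|^2=(x_1^0)^{-1}|\nabla u_0^-|^2$ on $\D\{u_0>0\}$. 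The Alt--Caffarelli classification in $\R^2$ then forces $u_0$ to be either a two-plane solution $u_0(y)=\alpha\langle y,e\rangle^+-\beta\langle y,e\rangle^-$ with $x_1^0\alpha^2=\beta^2/x_1^0$, or a one-phase half-plane solution.

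At free boundary points admitting a two-plane blow-up, a flatness-implies-$C^{1,\alpha}$ argument, with $x_1$ frozen at $x_1^0$ and its smooth oscillation absorbed as a small perturbation, shows that $\D\{u>0\}$ is locally a $C^{1,\alpha}$ graph near $x^0$ and that $u^+$ and $u^-$ extend as $C^{1,\alpha}$ functions up to this graph from each phase. Let $S$ collect the remaining free boundary points in $\Om\cap\{x_1>0\}$. At such points every blow-up must be one-phase, and in $\R^2$ Theorem \ref{T:ACF} is strict at these scales; a standard compactness/contradiction argument then rules out accumulation of $S$ along the free boundary, so $S$ consists of (mostly) locally isolated points, outside which $u$ is a classical solution. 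The main obstacle is the flatness-to-regularity step: the weighted free boundary condition $x_1|\nabla u^+|^2=x_1^{-1}|\nabla u^-|^2$ is anisotropic in a manner not covered verbatim by the standard De Silva--type iteration, so one must carefully track the effect of the varying weight $x_1$ as a controlled smooth perturbation of the frozen-coefficient problem on scales small compared to $x_1^0$, and adapt the partial Hodograph transform accordingly.
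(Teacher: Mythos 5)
Your plan and the paper's proof diverge completely. The paper does not minimize any energy and does not invoke the Alt--Caffarelli two-phase program at all; instead it makes a change of unknown that dissolves the free boundary problem into a single semilinear PDE. Setting $v := x_1 u^+$ on $\{u>0\}$ and $v := u^-$ on $\{u<0\}$, one checks that $v$ satisfies $\Delta v - \frac{1}{x_1}\partial_1 v + x_1^{-2}v^+ = 0$ throughout $\Omega\cap\{x_1>0\}$, and---this is the crucial observation---the free boundary condition $x_1|\nabla u^+|^2 = \frac{1}{x_1}|\nabla u^-|^2$ on $\{u=0\}$ becomes precisely $|\nabla v^+| = |\nabla v^-|$, i.e.\ continuity of $\nabla v$ across $\{v=0\}$. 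There is no free boundary left; $\{v=0\}$ is just the zero set of a solution of a fixed semilinear equation. Existence is then obtained by regularizing the nonlinearity, extracting maximal/minimal solutions of the approximating problems (a monotone sub/supersolution scheme), passing to the limit via uniform $W^{2,p}_{\mathrm{loc}}$ bounds away from $\{x_1=0\}$, and reconstructing $u = v^+/x_1 + v^-$. The $C^{1,\alpha}$ structure of $\{v=0\}$ outside a set $S$ of locally isolated points is then an immediate application of the Caffarelli--Friedman zero-set theorem for solutions of linear/superlinear elliptic equations (Theorem~3.1 of \cite{cf1}), the same theorem already used in the cusp argument.

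Measured against this, your proposal has a genuine gap at the very step you flag as ``the main obstacle'': the flatness-implies-$C^{1,\alpha}$ iteration for the anisotropic condition $x_1|\nabla u^+|^2 = \frac{1}{x_1}|\nabla u^-|^2$ with a varying coefficient that jumps across the interface. You do not carry this out, and it is not a routine perturbation of the constant-coefficient De Silva scheme; indeed the introduction explicitly warns that the Kirchhoff transform fails to reduce this jumping-diffusion problem to a standard one, which is exactly the obstruction your plan runs into. A secondary concern is that the weighted ACF functional of Theorem~\ref{T:ACF} carries the exponent $\beta^*\ge 2$ (not necessarily $=2$), so it does not directly yield the pointwise bound $|\nabla u^+(x^0)|\,|\nabla u^-(x^0)|\le C$ and the Lipschitz estimate you assert; and it is not established that minimizers of the weighted energy coincide with the variational solutions of Definition~3.1 (in particular, no non-degeneracy is proved to rule out a fat zero set). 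The paper's substitution $v = x_1 u^+$, $v = u^-$ is precisely the insight that makes all of this unnecessary, by trading the two-phase transmission problem for a single semilinear equation whose zero-set regularity is already known.
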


\begin{proof} Assume formally that $u$ solves \eqref{intro_sol} and define the auxiliary function

\[
v(x)=
\begin{cases}
x_1u^+(x), \  \text{if } u(x)>0,\\
u^-(x), \  \text{if } u(x)<0.
\end{cases}
\]

Since $\text{div}(x_1\nabla u)=0$ in $\{u>0\}$ and $u^+=\frac{1}{x_1}v^+$, we obtain that in $\{v>0\}$
\begin{align*}
0&=\text{div}(x_1\nabla u)=\text{div}(\nabla v)-\text{div}\left(\frac{e_1}{x_1}v\right)\\
&=\Delta v-\frac{\partial_1 v}{x_1}+\frac{v}{x_1^2}.
\end{align*}

Since in $\{u<0\}$ we have
\[
0=\text{div}\left(\frac{1}{x_1}\nabla u\right)=\frac{1}{x_1}\Delta u-\frac{1}{x_1^{2}}\partial_1u,
\]
then in $\{v<0\}$ we have
\[
\Delta v=\frac{1}{x_1}\partial_1 u=\frac{1}{x_1}\partial_1v.
\]
We conclude that
\begin{equation}\label{v}
0=\Delta v-\frac{1}{x_1}\partial_1v +x_1^{-2}v^+.
\end{equation}

We consider a regularization of \eqref{v},
\begin{equation}\label{v2}
0=\Delta v_{\varepsilon}-\frac{1}{x_1}\partial_1v_{\varepsilon} +x_1^{-2}B_{\varepsilon}(v_{\varepsilon}).
\end{equation}
where $B_{\varepsilon}\in C^{\infty}(\R)$, $B_{\varepsilon}(z)\ge \chi_{\{z>0\}}:=B(z)$ in $\R$ and $B_{\varepsilon}\searrow B$ as $\varepsilon \rightarrow 0$.

There exists a maximal (and minimal) solution of \eqref{v2}, $v_{\varepsilon}$ for each $\varepsilon >0$, in the sense that $v_{\varepsilon} \in W^{2,p}(\Omega)$ for every $p\in [1,+\infty)$ and $v_{\varepsilon} \ge w$, for every subsolution $w\in W^{2,n}(\Omega)$ of \eqref{v2} in $\Omega'\subseteq \Omega$ such that $w\le v_{\varepsilon}$ on $\D \Omega'$.

By $W^{2,p}$ estimates, $(v_{\varepsilon})$ is bounded in $W^{2,p}_{\text{loc}}(\Omega\cap \{x_1>0\})$, for any $p\in [1,\infty)$, so there exists $v\in C^{1,\alpha}$ such that up to a subsequence, $v_{\varepsilon}$ converges to $v$. Moreover, $v$ is a solution of \eqref{v}.

Let $x_0\in \partial\{u>0\}\cap \{x_1> 0\}$ and $r>0$ be small enough so that $B_r(x_0)\subseteq \{x_1 >0\}$, hence $x_0\in\partial\{v>0\}\cap\{x_1> 0\}$. Denote with
\[
S=\{x\in B_r(x_0) \ | \ v(x)=0=|\nabla v(x)|\}=\{x\in B_r(x_0) \ | \ u(x)=0 = |\nabla u(x)|\}.
\]
By Theorem 3.1 of \cite{cf1}, we conclude that $S$ consists of isolated singularities and $(\{v=0\}\cap\{x_1> 0\}\cap B_r(x_0))\setminus S$ is $C^{1,\alpha}$.

We can now show existence of a classical solution $u$. Define
\[
u(x)=
\begin{cases}
\frac{v^+(x)}{x_1}, \text{ if } v(x)>0\\
v^-(x), \text{ if } v(x)<0.
\end{cases}
\]
We conclude that away from $\{x_1=0\}$ $u$ is $C^{1,\alpha}(\overline{\{u>0\}})\cap C^{1,\alpha}(\overline{\{u<0\}})$, and $(\{ u=0\}\cap \{x_1> 0\}\cap B_r(x_0))\setminus S$ is $C^{1,\alpha}$.
\end{proof}

\section{Compliance with Ethical Standards}
Funding: M. Smit Vega Garcia and G.S. Weiss have been partially supported by the project ``Singularities in ElectroHydroDynamic equations'' of the German Research Council.

Conflict of Interest: The authors declare that they have no conflict of interest.

\end{document}